\documentclass[a4paper,14pt]{article}
\thispagestyle{empty} \pagestyle{myheadings} \markboth{\it}{\it}
\hbadness 10000 \vbadness 10000 \tolerance 100000 \textheight 21cm
\textwidth 14cm
\def\ds{\displaystyle}
\def\ni{\noindent}
\usepackage[a4paper, total={6in, 8in}]{geometry}
\usepackage{setspace}
\usepackage{amssymb}
\usepackage[utf8]{inputenc}
\usepackage[english]{babel}
\usepackage{amsthm}
\usepackage{amsmath}
\allowdisplaybreaks[1]

\newtheorem{theorem}{Theorem} [section]
\newtheorem{lemma}{Lemma} [section]

\newtheorem{corollary}{Corollary}

\newtheorem{proposition}{Proposition}

\numberwithin{equation}{section}

\begin{document}
\begin{center}
{\bf \Large Ramanujan type of congruences modulo $m$ for $(l, m)$-regular bipartitions }
\end{center}
\begin{center}
\bf T. Kathiravan
\footnotetext{Email: kkathiravan98@gmail.com}
\end{center}
{\baselineskip .5cm \begin{center}The Institute of Mathematical Sciences,\\HBNI,\\ CIT Campus, Taramani,\\ Chennai 600113, India.\\
\end{center}}
\ni {\bf \small Abstract:}
\par Let $B_{l,m}(n)$ denote the number of $(l,m)$-regular bipartitions of $n$. Recently, many authors proved several infinite families of congruences modulo $3$, $5$ and $11$ for $B_{l,m}(n)$. 
In this paper, using theta function identities to prove infinite families of congruences modulo $m$ for $(l,m)$-regular bipartitions, where $m\in\{7,3,11,13,17\}$.\\

\ni {\bf \small 2010 Mathematics Subject Classification:} 11P83, 05A17.\\
\ni {\bf \small Keywords:} Congruence, Regular Bipartition.

\section{Introduction}
For a positive integer $\ell$, a partition is said to be $\ell$-regular if none of its parts is divisible by $\ell$. 
If $b_\ell(n)$ denote the number of $\ell$-regular partitions of $n$, then the generating function for $b_\ell(n)$, is given by (see \cite{Andrews1})
 \begin{equation}\label{0.a}
\sum_{n=0}^{\infty}b_{\ell}(n)q^n=\frac{f_\ell}{f_1}.
\end{equation}
where as customary, we define
\begin{equation}\nonumber
f_k:=(q^k;q^k)_\infty=\prod^\infty_{m=1}(1-q^{mk}).
\end{equation}
In recent years, many authors studied arithmetic properties of $\ell$-regular partitions \cite{Carlson, Cui, Cui1, Dandurand, Furcy, Gordon, Hirschhorn, Keith, Webb, Xia3, Xia2, Xia, Yao}.\\
Recall that, for a positive integers $l>1$ and $m>1$, a bipartition $(\lambda,\mu)$ of $n$ is a pair of partitions $(\lambda,\mu)$ such that the sum of all the parts equals $n$. 
A $(l,m)$-regular bipartition of $n$ is a bipartition $(\lambda,\mu)$ of $n$ such that $\lambda$ is a $l$-regular partition and $\mu$ is a $m$-regular partition. 
If $B_{l,m}(n)$ denote the number of $(l,m)$-regular bipartitions of $n$, then the generating function $B_{l,m}(n)$, is given by (see \cite{Lin1})
\begin{equation}\label{0.1}
\sum_{n=0}^{\infty}B_{l,m}(n)q^n=\frac{f_lf_m}{f^2_1}.
\end{equation}
In \cite{Dou}, Dou proved that, for $n\geq0$ and $\alpha\geq2$,
\begin{equation*}
B_{3,11}\left(3^\alpha n+\frac{5\cdot3^{\alpha-1}-1}{2}\right)\equiv0\pmod{11}.
\end{equation*}
The reader may refer to \cite{Kathiravan, Lin1, Lin2, Wang} for related works. Recently, Adiaga and  Ranganatha \cite{Adiaga} proved infinite families of congruences modulo $3$ for $B_{3,7}(n)$. 
More recently, Xia and Yao \cite{Xia1} proved several infinite families of congruences modulo $3$ for $B_{3,s}(n)$, modulo $5$ for $B_{5,s}(n)$ and modulo $7$ for $B_{3,7}(n)$. For example,
let $s$ be a positive integer and let $p\geq5$ be a prime, for $n\geq0$,
\begin{equation*}
B_{3,s}\left(p^{2\alpha+1}n+\frac{(1+s)(p^{2\alpha+2}-1)}{24}\right)\equiv0\pmod3.
\end{equation*}
The aim of this paper is to prove several infinite families of congruences modulo $7$ for $B_{3,7}(n)$, modulo $3$ for $B_{9,5}(n)$, modulo $11$ for $B_{5,11}(n)$, modulo $13$ for $B_{5,13}(n)$ and modulo $17$ for $B_{81,17}(n)$. The main results of this paper are as follows,
\begin{theorem}\label{ak}\normalfont For all $n\geq0$ and $m\geq0$,
\begin{align}
\label{ak1}
B_{3,7}\left(4^{7m}n+\frac{4^{7m}-1}{3}\right)&\equiv3^mB_{3,7}(n)\pmod{7},\\\label{ak2}
B_{3,7}\left(4^{7m+7}n+\frac{10\cdot4^{7m+6}-1}{3}\right)&\equiv0\pmod{7}.
\end{align}
\end{theorem}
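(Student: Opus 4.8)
The plan is to reduce the problem modulo $7$ to a single eta‑quotient identity and then to iterate a $4$‑dissection. The first step is the elementary congruence $f_7=(q^7;q^7)_\infty\equiv(q;q)_\infty^7=f_1^7\pmod 7$, which, inserted into \eqref{0.1}, gives
\[
\sum_{n\ge0}B_{3,7}(n)q^n=\frac{f_3f_7}{f_1^2}\equiv f_1^{5}f_3\pmod 7 ,
\]
so it suffices to understand the coefficients of $g(q):=f_1^{5}f_3$ modulo $7$. Let $E$ be the operator sending $\sum_n a_nq^n$ to $\sum_n a_{4n+1}q^n$. Since $4\cdot\frac{4^{k}-1}{3}+1=\frac{4^{k+1}-1}{3}$, an easy induction shows that $E^{k}$ applied to $\sum_n B_{3,7}(n)q^n$ produces exactly $\sum_n B_{3,7}\!\bigl(4^{k}n+\frac{4^{k}-1}{3}\bigr)q^n$. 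Thus \eqref{ak1} is equivalent to $E^{7m}g\equiv 3^{m}g\pmod 7$, and by induction on $m$ --- the step being the substitution $n\mapsto 4^{7m}n+\frac{4^{7m}-1}{3}$ together with $4^{7}\cdot\frac{4^{7m}-1}{3}+\frac{4^{7}-1}{3}=\frac{4^{7(m+1)}-1}{3}$ --- it is enough to prove the single ``period'' congruence $E^{7}g\equiv 3g\pmod 7$.

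To prove $E^{7}g\equiv 3g\pmod 7$ I would compute the seven iterated $4$‑dissections $g,Eg,E^{2}g,\dots,E^{7}g$, reducing modulo $7$ after each one. The tools are the classical $2$‑dissections (hence $4$‑dissections): $\varphi(-q)=\varphi(q^{4})-2q\psi(q^{8})$, the two‑dissection of $\psi(q)$, Jacobi's identity $f_1^{3}=\sum_{k\ge0}(-1)^{k}(2k+1)q^{k(k+1)/2}$, and the two‑dissection of $1/f_1^{2}$ coming from $1/\varphi(-q)=\dfrac{f_8^{5}}{f_2^{4}f_{16}^{2}}+2q\dfrac{f_4^{2}f_{16}^{2}}{f_2^{4}f_8}$; writing $g=f_1^{2}f_3\cdot f_1^{3}$ (or $g=(f_1^{3})^{2}\cdot f_3/f_1$) and reducing mod $7$ at each stage keeps the intermediate theta‑quotients of bounded conductor, so the process terminates. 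There is also a structural reason the number seven occurs: in characteristic $7$ one has $(E-3I)^{7}\equiv E^{7}-3^{7}I\equiv E^{7}-3I\pmod 7$, because every $\binom{7}{j}$ with $1\le j\le 6$ is divisible by $7$; hence $E^{7}g\equiv 3g$ is equivalent to $(E-3I)^{7}g\equiv0\pmod 7$, and in practice the ``defect'' sequence $g,(E-3I)g,(E-3I)^{2}g,\dots$ reaches $0$ modulo $7$ within seven steps, which is the same dissection calculation organised more economically.

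For \eqref{ak2} I would follow the residue class $4n+3$ through the same block. The coefficient of $q^{4n+3}$ in $\sum_n B_{3,7}\!\bigl(4^{6}n+\frac{4^{6}-1}{3}\bigr)q^n=E^{6}\bigl(\sum_n B_{3,7}(n)q^n\bigr)$ is $B_{3,7}\!\bigl(4^{6}(4n+3)+\frac{4^{6}-1}{3}\bigr)=B_{3,7}\!\bigl(4^{7}n+\frac{10\cdot4^{6}-1}{3}\bigr)$, and the computation above shows that $E^{6}g$ reduces modulo $7$ to a theta‑quotient whose $4$‑dissection carries no $q^{4n+3}$ term; this is the case $m=0$ of \eqref{ak2}. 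The general case follows from \eqref{ak1}: by that congruence $\sum_n B_{3,7}\!\bigl(4^{7m+6}n+\frac{4^{7m+6}-1}{3}\bigr)q^n=E^{6}\bigl(E^{7m}g\bigr)\equiv 3^{m}E^{6}g\pmod 7$ still has vanishing $q^{4n+3}$‑component, and the coefficient of $q^{4n+3}$ in it equals $B_{3,7}\!\bigl(4^{7m+7}n+\frac{10\cdot4^{7m+6}-1}{3}\bigr)$.

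The main obstacle is the explicit seven‑fold $4$‑dissection of $f_1^{5}f_3$ modulo $7$. The conceptual skeleton --- reduce mod $7$, iterate $E$, close the orbit after seven steps, induct on $m$ --- is routine, but verifying that the $q^{4n+1}$‑branch returns $3f_1^{5}f_3$ and that the $q^{4n+3}$‑branch at the sixth stage vanishes modulo $7$ requires careful theta‑function bookkeeping at every stage, and this is where any slip would occur; arranging the argument so that only these two base computations are needed, with everything else produced by the inductions on $m$, is the key simplification.
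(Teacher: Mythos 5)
Your structural reduction is exactly the paper's: reduce to $f_1^5f_3\pmod 7$ via \eqref{k1}, iterate the operator $E$ that extracts the $4n+1$ coefficients, show the orbit returns to $3f_1^5f_3$ after seven steps, and read \eqref{ak2} off the $4n+3$ branch at the sixth stage (your identity $4^6(4n+3)+\frac{4^6-1}{3}=4^7n+\frac{10\cdot4^6-1}{3}$ and the deduction of general $m$ from \eqref{ak1} are both correct). The problem is that the entire substance of the proof is the congruence $E^7g\equiv 3g\pmod 7$ together with the vanishing of the $q^{4n+3}$-part of $E^6g$, and your proposal asserts these without establishing them: ``I would compute the seven iterated $4$-dissections'' is a plan, not a proof, and you acknowledge as much. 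As written, the one fact on which everything hinges is unverified, so this is a genuine gap rather than a complete argument.

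For comparison, the paper's execution is also considerably more economical than seven successive dissections. Two dissection computations (using the $2$-dissections \eqref{e2}, \eqref{kp2}, \eqref{kp}, \eqref{kp1}, \eqref{kp3} of $f_1^4$, $f_1f_3$, $f_3/f_1^3$, $f_1^3/f_3$ and $f_3^2/f_1^2$) give $Eg\equiv 6g+3h$ and $E^2g\equiv 6g+4h$ with $h=\frac{f_2^9f_3^4}{f_1^4f_6^3}$, whence the second-order recurrence $E^2\equiv 6E+5I$ on the orbit of $g$ (this is \eqref{w.11}). Its characteristic polynomial satisfies $x^2-6x-5\equiv(x-3)^2\pmod 7$, so $(E-3I)^2g\equiv0$; this is the sharp form of your $(E-3I)^7$ heuristic and immediately yields $E^7g\equiv 3^7g+7\cdot3^6(E-3I)g\equiv3g\pmod 7$. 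One further dissection of $h$ (equation \eqref{w.21}) supplies the vanishing odd part needed for \eqref{ak2}. Note also that your proposed toolkit (dissections of $\varphi$, $\psi$, $1/f_1^2$, and Jacobi's identity) contains no $2$-dissection of $f_1f_3$ or of any quotient mixing $f_1$ and $f_3$, which is indispensable for dissecting $f_1^5f_3$; you would need to import identities of the type \eqref{kp}--\eqref{kp3} before the computation you describe could even begin.
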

\begin{theorem}\label{0a}\normalfont For all $n\geq0$ and $m\geq0$,
\begin{align}\normalfont\label{0a1}
B_{9,5}\left(5^{4m}n+\frac{5^{4m}-1}{2}\right)&\equiv2^mB_{9,5}(n)\pmod3,\\
B_{9,5}\left(5^{4m+4}n+\frac{(2k+1)\cdot5^{4m+3}-1}{2}\right)&\equiv 0\pmod3, ~~\textrm{where} ~~k\in\{4,5\}\label{0a2}.
\end{align}
\end{theorem}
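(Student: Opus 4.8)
The plan is to mirror the dissection‑and‑iteration strategy used for Theorem~\ref{ak}, but now working modulo $3$ with the base $5$. Starting from the generating function \eqref{0.1},
\begin{equation*}
\sum_{n=0}^{\infty}B_{9,5}(n)q^n=\frac{f_9 f_5}{f_1^2},
\end{equation*}
I would first reduce it modulo $3$. Since $f_1^3\equiv f_3\pmod 3$, one has $f_1^2\equiv f_3/f_1\cdot f_1^{?}$‑type manipulations available; more usefully $\dfrac{1}{f_1^2}\equiv\dfrac{f_1}{f_3^{?}}$ is awkward, so instead I would write $f_9=f_9$ and use $f_9\equiv f_3^3\pmod 3$ only where helpful, aiming to express $\sum B_{9,5}(n)q^n$ modulo $3$ in terms of a single quotient whose $5$‑dissection is classical. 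The key tool is the $5$‑dissection of $1/f_1$ (equivalently of the Rogers–Ramanujan type quotients), together with the $5$‑dissection of $f_5$-related theta functions, which reduce everything to arithmetic‑progression extractions on the exponents.

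The heart of the argument is a self‑referential identity at level $5$. After extracting the subprogression $5n+\frac{5-1}{2}=5n+2$ (this is the residue that isolates the ``interesting'' piece), I expect to obtain a congruence of the shape
\begin{equation*}
\sum_{n=0}^{\infty}B_{9,5}\!\left(5n+2\right)q^n\equiv c\cdot\frac{f_9 f_5}{f_1^2}\pmod 3
\end{equation*}
for some explicit constant $c$; the claim \eqref{0a1} for $m=1$ would force $c\equiv 2\pmod 3$, but actually the exponent $5^{4m}$ rather than $5^m$ signals that a single $5$‑dissection only returns to the generating function after \emph{four} iterations. So the real target is an identity
\begin{equation*}
\sum_{n=0}^{\infty}B_{9,5}\!\left(5^{4}n+\tfrac{5^{4}-1}{2}\right)q^n\equiv 2\cdot\frac{f_9 f_5}{f_1^2}\pmod 3,
\end{equation*}
established by composing four dissections, three of which kill their ``interesting'' residues (giving factors that are trivial mod $3$ or that multiply to $1$) while the fourth contributes the factor $2$. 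Then \eqref{0a1} follows immediately by induction on $m$, and \eqref{0a2} follows by noting that within the four‑step cycle the intermediate progressions $5^{4m+3}$ with offsets corresponding to $k\in\{4,5\}$ land on residues whose coefficient series is annihilated by one of the three ``killing'' dissections — i.e.\ the relevant theta‑function term simply has no support on that arithmetic progression modulo $3$.

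The main obstacle is bookkeeping the four‑fold iterated $5$‑dissection of $f_9 f_5/f_1^2$ modulo $3$: one must track which residue classes survive at each stage and verify that exactly one of the four steps produces a nonzero scalar (the $2$) while the others either vanish or return unit factors. Concretely, the hard step is the $5$‑dissection of $1/f_1^2$ (or equivalently of $f_1 \cdot (1/f_1^3)\equiv f_1/f_3\pmod 3$, whose $5$‑dissection follows from that of $f_1$), because the five components are governed by the quintic theta relation and one must identify, modulo $3$, precisely which component carries the $B_{9,5}$ progression and how $f_5,f_9$ (both of which are already supported on multiples of $5$, resp.\ of $9$) interact with that component. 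Once the $m=1$ case of the iterated identity is nailed down as an exact congruence between eta‑quotients, the induction and the vanishing statement \eqref{0a2} are routine. I would therefore organize the proof as: (i) record the needed $5$‑dissections as lemmas; (ii) prove the single four‑step master identity modulo $3$; (iii) deduce \eqref{0a1} by induction and \eqref{0a2} by inspecting the discarded residues.
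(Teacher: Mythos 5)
Your overall strategy --- reduce mod $3$, iterate the $5$-dissection of $f_1$ while following the residue class $2\pmod 5$, deduce \eqref{0a1} by induction and \eqref{0a2} from the residues with no support at the last stage --- is exactly the paper's. But there is a concrete gap at the very first step, and it causes you to misplace the difficulty of the whole argument. You leave the mod-$3$ reduction unresolved (the ``$f_1^{?}$'' and ``$f_3^{?}$'' placeholders) and then declare that the hard step is the $5$-dissection of $1/f_1^2$, ``equivalently of $f_1/f_3$, whose $5$-dissection follows from that of $f_1$.'' Neither object is needed, and the parenthetical claim is false as stated: $f_3$ is not a series in $q^5$, so the $5$-dissection of $f_1/f_3$ does not follow from that of $f_1$ alone. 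The point you are circling but never land on is that $f_9\equiv f_3^3\equiv f_1^9\pmod 3$, so
\begin{equation*}
\sum_{n\ge 0}B_{9,5}(n)q^n=\frac{f_9f_5}{f_1^2}\equiv f_1^7f_5\pmod 3,
\end{equation*}
and the denominator disappears entirely. After that the only inputs are the single dissection $f_1=f_{25}\left(S^{-1}-q-q^2S\right)$ and the companion identity $f_5^6/f_{25}^6=S^{-5}-11q^5-q^{10}S^5$, applied to $\left(S^{-1}-q-q^2S\right)^7$ reduced mod $3$.

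Second, your description of the four-step cycle (``three of which kill their interesting residues \dots while the fourth contributes the factor $2$'') does not match the actual mechanism, and an attempt to prove it in that form would fail. Each extraction of the residue $2\pmod 5$ maps a series of the shape $a\,f_1^7f_5+b\,q\,f_1f_5^7$ to another series of the same shape; modulo $3$ the coefficient pairs evolve $(1,0)\to(2,2)\to(2,1)\to(0,1)\to(2,0)$, so no individual step is trivial and no individual step ``is'' the factor $2$ --- only the composite of all four returns twice the starting series. The vanishing \eqref{0a2} you do get right in spirit: at the third stage the series is $q\,f_1f_5^7\equiv q\,f_5^7f_{25}\left(S^{-1}-q-q^2S\right)$, supported only on exponents $\equiv 1,2,3\pmod 5$, and the two empty residues are exactly the progressions with $k=4,5$. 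To turn the outline into a proof you must (i) commit to the reduction to $f_1^7f_5$, and (ii) actually carry out the successive dissections rather than assert their net effect.
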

\begin{theorem}\label{1a}\normalfont For all $n\geq0$ and $m\geq0$,
\begin{eqnarray}\label{12}
B_{5,11}\left(5^{12m}n+\frac{7\cdot5^{12m}-7}{12}\right)&\equiv&2^mB_{5,11}(n)\pmod{11}.\\
\label{13}
B_{5,11}\left(5^{12m+12}+\frac{(12k+11)\cdot5^{12m+11}-7}{12}\right)&\equiv&0\pmod{11},~~\textrm{where}~~k\in\{4,5\}.
\end{eqnarray}
\end{theorem}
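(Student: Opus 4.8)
The plan is to work throughout modulo $11$ and to run an iterated $5$-dissection. Since $(1-x)^{11}\equiv 1-x^{11}\pmod{11}$ we have $f_{11}\equiv f_1^{11}\pmod{11}$, so \eqref{0.1} gives
\[
\sum_{n\ge 0}B_{5,11}(n)\,q^{n}\ \equiv\ \frac{f_1^{11}f_5}{f_1^{2}}\ =\ f_1^{9}f_5\ \pmod{11}.
\]
Since $f_5$ is a power series in $q^{5}$, extracting any arithmetic progression modulo $5$ on the right is the same as $5$-dissecting $f_1^{9}$ and multiplying back by $f_5$. The relevant bookkeeping is: for $N=5^{12m}n+\tfrac{7(5^{12m}-1)}{12}$ one has $12N+7=5^{12m}(12n+7)$, so \eqref{12} describes how coefficient extraction behaves under $12N+7\mapsto 5^{12m}(12N+7)$, and $12N+7\equiv0\pmod5$ forces $N\equiv4\pmod5$.

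First I would build the engine: a finite ladder of $5$-dissection congruences. Put $E_{0}:=f_1^{9}f_5$. Using Ramanujan's $5$-dissection $f_1=f_{25}\bigl(R^{-1}-q-q^{2}R\bigr)$, where $R$ is the relevant Rogers--Ramanujan quotient in $q^{5}$ (a power series with constant term $1$), together with the companion identity expressing $R^{-1}-R$ via $f_1,f_{25}$ --- and, where convenient, the $5$-dissections of $\varphi,\psi$ or Jacobi's $f_1^{3}=\sum_{k\ge0}(-1)^{k}(2k+1)q^{k(k+1)/2}$ applied to $f_1^{9}=(f_1^{3})^{3}$ --- I would show that picking out the residue class $\equiv4\pmod5$ of $E_{0}$ yields, after $q^{5}\mapsto q$, a constant multiple $c_{1}E_{1}$ of a new eta-quotient $E_{1}$; that picking out the residue class of $E_{1}$ forced by the index bookkeeping (these classes alternate $4,2,4,2,\dots$) yields $c_{2}E_{2}$; and so on. The two structural facts to verify --- and they are exactly what \eqref{12} encodes --- are that the chain is periodic of period $12$, i.e. $E_{12}=E_{0}$, and that $c_{1}c_{2}\cdots c_{12}\equiv2\pmod{11}$. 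Unwinding twelve steps gives $B_{5,11}\bigl(5^{12}N+\tfrac{7(5^{12}-1)}{12}\bigr)\equiv 2\,B_{5,11}(N)\pmod{11}$ for all $N\ge0$, and \eqref{12} follows by induction on $m$ using $5^{12m}\bigl(5^{12}N+\tfrac{7(5^{12}-1)}{12}\bigr)+\tfrac{7(5^{12m}-1)}{12}=5^{12(m+1)}N+\tfrac{7(5^{12(m+1)}-1)}{12}$.

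For \eqref{13} I would strip off \eqref{12} first. The arguments there satisfy $12N+7=5^{12m+11}(60n+12k+11)$, so \eqref{12} reduces \eqref{13} to
\[
B_{5,11}\!\left(\tfrac{5^{11}(60n+12k+11)-7}{12}\right)\ \equiv\ 0\ \pmod{11}\qquad(k\in\{4,5\},\ n\ge0).
\]
Here $12(\,\cdot\,)+7$ is divisible by $5^{11}$ but not $5^{12}$, so running the same ladder $11$ further times --- one short of a full period --- writes this coefficient as $c_{1}\cdots c_{11}$ times the coefficient, in a single residue class modulo $5$, of the eleventh eta-quotient $E_{11}$: for $k=4$ one needs the residue class $4\pmod5$ of $E_{11}$, and for $k=5$ the residue class $0\pmod5$ in positive degree. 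Thus \eqref{13} reduces to the finite check that these two classes of $E_{11}$ vanish modulo $11$; this is consistent, since it is the residue class $2\pmod5$ of $E_{11}$, not $4$ or $0$, that carries the ladder on to $E_{12}=E_{0}$.

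The hard part will be the explicit dissection computation underlying the ladder: one must $5$-dissect the ninth power $f_1^{9}$ and then follow $E_{0},E_{1},\dots,E_{12}$ once around, keeping every $E_{i}$ inside the small closed family of eta-quotients in which the ladder lives and tracking each $c_{i}$ modulo $11$. That is where all of the theta-function identity work goes (products and reciprocals of $R$, the $\varphi$ and $\psi$ dissections, and Euler's and Jacobi's identities); once the ladder and the $c_{i}$ are known, the rest is the routine induction above and the single finite verification for \eqref{13}.
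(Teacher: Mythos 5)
Your global architecture is the same as the paper's: reduce to $f_1^9f_5\pmod{11}$, iterate the $5$-dissection $f_1=f_{25}(S^{-1}-q-q^2S)$ along the residue classes $4,2,4,2,\dots$, show that twelve steps return you to $2\,f_1^9f_5$, deduce \eqref{12} by induction on $m$, and obtain \eqref{13} by stopping at step $11$ and checking that the classes $4$ and $0$ there vanish. The numerology you predict (period $12$, multiplier $2$, the two dead classes at step $11$) is exactly what the paper finds, and your index bookkeeping for both parts is correct. But the structural claim on which your ``ladder of constants $c_1,\dots,c_{12}$'' rests fails at the very first rung: extracting the class $4\pmod 5$ from $f_1^9f_5$ does not give a constant multiple of a single eta-quotient. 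It gives (paper's \eqref{1.4}) $9f_1^7f_5^3+2qf_1f_5^9$, and every later step likewise lives in a genuinely two-dimensional space, $\langle f_1^7f_5^3,\,qf_1f_5^9\rangle$ at odd steps and $\langle f_1^9f_5,\,qf_1^3f_5^7\rangle$ at even steps. No one-dimensional ladder can exist here: the two-step transition is the recurrence $B_{5,11}(5^4n+364)\equiv B_{5,11}(5^2n+14)+7B_{5,11}(n)$ (paper's \eqref{1.x}), whose characteristic polynomial $x^2-x-7$ has discriminant $29\equiv7\pmod{11}$, a quadratic non-residue, so the transition matrix is irreducible over $\mathbb{F}_{11}$ and preserves no line --- in particular it never sends a single eta-quotient to a scalar multiple of one.

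The repair is what the paper actually does: carry both components, encode the two-step map as the second-order recurrence $X_{m+1}=X_m+7X_{m-1}$ solved in Binet form (the $A_m,a_m$ of Lemma \ref{1}), evaluate at $m=5$ to get $10f_1^9f_5+4qf_1^3f_5^7$ at step $10$, and then observe the collapse to the single term $9qf_1f_5^9$ at step $11$ --- which is simultaneously the source of the vanishing \eqref{1.18a} (classes $4$ and $0$ of $f_{25}f_5^9(q S^{-1}-q^2-q^3S)$ are empty) and of the return to $2f_1^9f_5$ at step $12$, i.e.\ \eqref{1.18b}. So you should replace the product of scalars $c_1\cdots c_{12}$ by a $2\times2$ linear recursion (or equivalently track two-term generating functions throughout); your induction on $m$ and your reduction of \eqref{13} to \eqref{12} plus the step-$11$ vanishing then go through unchanged.
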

\begin{theorem}\label{1b}\normalfont For all $n\geq0$ and $m\geq0$,
\begin{eqnarray}\label{14}
B_{5,13}\left(5^{6m}n+\frac{2\cdot5^{6m}-2}{3}\right)&\equiv&8^mB_{5,13}(n)\pmod{13}.\\
\label{15}
B_{5,13}\left(5^{6m+6}n+\frac{(3k+1)\cdot5^{6m+5}-2}{3}\right)&\equiv&0\pmod{13},~~\textrm{where}~~k\in\{1,5\}.
\end{eqnarray}
\end{theorem}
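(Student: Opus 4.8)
The plan is to pass to a single eta-quotient modulo $13$ and then reduce the whole theorem to one block of six consecutive $5$-dissections, along the same lines one would use for Theorem \ref{1a}. Since $(1-x)^{13}\equiv 1-x^{13}\pmod{13}$ gives $f_{13}\equiv f_1^{13}\pmod{13}$, the generating function \eqref{0.1} yields
\[
\sum_{n\ge0}B_{5,13}(n)\,q^n=\frac{f_5f_{13}}{f_1^2}\equiv f_1^{11}f_5\pmod{13}.
\]
I then claim that \eqref{14} for all $m$ follows from its case $m=1$, i.e. from $B_{5,13}\!\left(5^6n+\tfrac{2(5^6-1)}{3}\right)\equiv 8\,B_{5,13}(n)\pmod{13}$, by induction on $m$: replace $n$ by $5^{6(m-1)}n+\tfrac{2(5^{6(m-1)}-1)}{3}$ and use $5^6\cdot\tfrac{2(5^{6(m-1)}-1)}{3}+\tfrac{2(5^6-1)}{3}=\tfrac{2(5^{6m}-1)}{3}$ together with $8\cdot8^{m-1}=8^m$. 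In the same way \eqref{15} reduces to the statement that the sixth $5$-dissection in the block, read off at the two residues singled out by $k=1$ and $k=5$ instead of at the loop-closing residue, has all its coefficients $\equiv 0\pmod{13}$.

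So the substance is a chain of six $5$-dissection identities modulo $13$ for $f_1^{11}f_5$. The tool is a classical $5$-dissection of $f_1$ (Ramanujan's, $f_1=f_{25}\bigl(R(q^5)^{-1}-q-q^2R(q^5)\bigr)$, with $R$ the Rogers--Ramanujan continued fraction) or, equivalently, the known $5$-dissections of $f_1^3$ and $f_1^4$, from which $f_1^{11}=(f_1^4)^2f_1^3$ is assembled. Raising the dissection of $f_1$ to the $11$th power, multiplying by $f_5$ (which is inert under a $5$-dissection of the $q$-series), expanding the multinomial coefficients and the Rogers--Ramanujan relations modulo $13$, and extracting the terms with exponents $\equiv 1\pmod5$ should express $\sum_nB_{5,13}(5n+1)q^n$, modulo $13$, as an explicit constant times an eta-quotient $S_1$. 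Running the same procedure on $S_1$ at residue $3$, then on the result at residue $1$, and so on through the residue pattern $1,3,1,3,1,(3)$ dictated by the base-$5$ digits of $\tfrac{2(5^6-1)}{3}=(313131)_5$, produces shapes $S_1,\dots,S_6$; the point to be checked is that $S_6$ is again a constant times $f_1^{11}f_5$ and that the six accumulated constants multiply to $8$ modulo $13$. The abundance of cancellation modulo $13$ --- note $5^2\equiv 8^2\equiv-1\pmod{13}$ --- is what keeps each step inside a short list of shapes and makes the loop close.

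The two cases $k\in\{1,5\}$ of \eqref{15} then come out of the sixth dissection. After $m$ complete blocks one has $\sum_n B_{5,13}\!\left(5^{6m}n+\tfrac{2(5^{6m}-1)}{3}\right)q^n\equiv 8^mf_1^{11}f_5\pmod{13}$; applying five more loop-pattern dissections and then one final $5$-dissection at the residue corresponding to $k=1$ (resp. to $k=5$, after absorbing the carry $5^{6m+6}$ built into that shift) gives coefficient $0$ modulo $13$, and --- as a routine check of the base-$5$ arithmetic --- those residues select exactly the arithmetic progression $5^{6m+6}n+\tfrac{(3k+1)5^{6m+5}-2}{3}$.

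The main obstacle is the computation itself: carrying the mod-$13$ $5$-dissection of the weight-$6$ eta-quotient $f_1^{11}f_5$ through six iterations, pinning down the intermediate shapes $S_1,\dots,S_5$ so that every step is genuinely ``closed'', and verifying that the product of the six step-constants is exactly $8\pmod{13}$ and not merely some other residue (conjecturally this periodicity reflects the order with which the $U_5$-operator acts on a finite-dimensional space of mod-$13$ forms, but the proof need not use this). A subsidiary bookkeeping point is to track the $q$-shift --- the exponent $\tfrac23=\tfrac{5+13-2}{24}$ of $f_1^{11}f_5$ --- consistently, since getting one extracted residue wrong propagates through all the later steps.
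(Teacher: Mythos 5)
Your plan is essentially the paper's proof: reduce to $f_1^{11}f_5\pmod{13}$, iterate Ramanujan's $5$-dissection of $f_1$ through the residue pattern $1,3,1,3,1,3$ dictated by $\frac{2\cdot5^6-2}{3}=(313131)_5$, check that the loop closes with constant $8$, read the $k\in\{1,5\}$ vanishing off the final dissection, and then induct on $m$. The one detail you underestimate is that the intermediate shapes are not single eta-quotients but three-term sums such as $2\frac{f_{13}}{f_5}+11f_1^7f_5^5q+8f_1f_5^{11}q^2$, so in addition to the dissection of $f_1$ one also needs the $5$-dissection of $1/f_1$ (equation \eqref{0.3a}) to handle the $f_{65}/f_1$ terms; the paper packages the same six-step computation as a two-term recurrence $B_{5,13}(5^4n+416)\equiv 8B_{5,13}(5^2n+16)+B_{5,13}(n)$ with Binet-form coefficients $C_m,c_m$ before specializing, but the computational content is identical.
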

\begin{theorem}\label{1c}\normalfont  For all $n\geq0$ and $m\geq0$,
\begin{eqnarray}
B_{81,17}\left(81\cdot4^{9m}n+81\frac{(2\cdot4^{8m}-2)}{3}+50\right)&\equiv&0,\pmod{17}\label{s13}\\
B_{81,17}\left(81\cdot4^{9m}n+81\frac{(2\cdot4^{9m}-2)}{3}+50\right)&\equiv&8^mB_{81,17}(81n+50),\pmod{17}\label{s14}\\
B_{81,17}\left(162\cdot4^{8m}n+81\frac{(5\cdot4^{8m}-2)}{3}+50\right)&\equiv&5^mB_{81,17}(162n+131).\pmod{17}\label{s15}
\end{eqnarray}
\end{theorem}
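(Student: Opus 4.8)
The plan is to start from the generating function $\sum_{n\ge 0}B_{81,17}(n)q^{n}=\frac{f_{81}f_{17}}{f_{1}^{2}}$ and reduce it modulo $17$. Since $17$ is prime, $f_{17}=(q^{17};q^{17})_{\infty}\equiv(q;q)_{\infty}^{17}=f_{1}^{17}\pmod{17}$, so $\sum_{n}B_{81,17}(n)q^{n}\equiv f_{81}f_{1}^{15}\pmod{17}$. The factor $f_{81}$ is supported on multiples of $81$, so extracting the subprogression $81n+50$ amounts to isolating the residue class $50\pmod{81}$ inside $f_{1}^{15}$ and then multiplying by $f_{1}$ (the series $f_{81}$ with $q^{81}\mapsto q$); in other words $\sum_{n}B_{81,17}(81n+50)q^{n}\equiv f_{1}\,D(q)\pmod{17}$, where $D(q)=\sum_{n}\bigl([q^{81n+50}]\,f_{1}^{15}\bigr)q^{n}$.

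The first real task is to evaluate $D$ as an explicit eta-quotient. I would write $f_{1}^{15}=(f_{1}^{3})^{5}$ and invoke the $3$-dissection of $f_{1}^{3}$: Jacobi's identity $f_{1}^{3}=\sum_{k\ge 0}(-1)^{k}(2k+1)q^{k(k+1)/2}$ together with the fact that the triangular numbers $k(k+1)/2$ occupy only the residues $0$ and $1$ modulo $3$ gives $f_{1}^{3}=P(q^{3})+q\,Q(q^{3})$ for explicit eta-quotients $P,Q$. Taking fifth powers yields a $3$-dissection of $f_{1}^{15}$; iterating this three further times — re-dissecting, via their own $3$-dissections, the $q^{3}$-series that arise — refines it to a full $81$-dissection of $f_{1}^{15}$, from which the part supported on $81n+50$ can be read off and simplified to an eta-quotient. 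Multiplying back by $f_{1}$ produces an explicit $G$ with $\sum_{n}B_{81,17}(81n+50)q^{n}\equiv G(q)\pmod{17}$. A single further $2$-dissection of $G$ (using, for instance, the standard $2$-dissections of $\varphi(-q)=f_{1}^{2}/f_{2}$ and $\psi(q)=f_{2}^{2}/f_{1}$) separates it into an even and an odd part; the odd part is the generating function of $B_{81,17}(162n+131)$, since $131\equiv 50\pmod{81}$ picks out exactly the odd residues modulo $162$, and it too simplifies to an explicit eta-quotient $\widetilde G$.

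With $G$ (and its $2$-dissection parts, including $\widetilde G$) in hand, the heart of the proof is a self-replicating dissection. I would establish a $4$-dissection identity for $G$ — equivalently, two consecutive $2$-dissections — and iterate it, keeping track of how the small collection of eta-quotients that recurs gets permuted, up to scalars modulo $17$, at each step: after nine iterations along the appropriate branch one recovers $8\,G$ modulo $17$, while a companion branch of the same iterated dissection is $\equiv 0\pmod{17}$; rewriting these in terms of the arithmetic progressions gives precisely \eqref{s14} and \eqref{s13}. An analogous iterated $4$-dissection starting from the even part of $G$ returns $5\,\widetilde G$ modulo $17$ after eight steps, which is \eqref{s15}. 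The three congruences then follow by induction on $m$, with the base cases being exactly the eta-quotient identities for $G$ and $\widetilde G$ obtained above.

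The step I expect to be the main obstacle is the iterated $3$-dissection: carrying it out cleanly enough to pin down exactly the class $50\pmod{81}$ of $f_{1}^{15}$ and to recognize the outcome as a manageable eta-quotient $G$ requires substantial cross-term bookkeeping, and in practice one must first guess the shape of $G$ — and of the eta-quotients arising in the later $4$-dissections, together with the exact residue classes selected at each step — and then verify the resulting identities. Once the correct $G$, $\widetilde G$ and the branch structure are in place, the self-similarity and the explicit scalars $8$ and $5$ modulo $17$ fall out of routine, if lengthy, coefficient comparisons, and the induction on $m$ is immediate.
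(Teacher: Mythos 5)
Your overall strategy is sound and the first half coincides with the paper's: the paper also reduces to $f_{81}f_1^{15}\pmod{17}$, writes $f_1^{15}=(f_1^3)^5$, and performs four successive $3$-dissections to isolate the class $50\pmod{81}$, arriving at $\sum_n B_{81,17}(81n+50)q^n\equiv 5f_1^{16}\pmod{17}$ (its \eqref{s9}); the only cosmetic difference is that it uses Ramanujan's cubic continued fraction identities $f_1^3=f_9^3(u^{-1}-3q+4q^3u^2)$ and $\frac{f_1^{12}}{f_3^{12}}+27q=(v^{-1}+4qv^2)^3$ from Berndt rather than Jacobi's identity, which lets it recombine the cross terms cleanly at each stage. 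Where you genuinely diverge is the second half. You propose to derive the $4^9$- and $4^8$-step self-replication of $G=5f_1^{16}$ from scratch by iterated $2$-dissections; the paper instead observes that $f_1^{16}\equiv\sum_n b_{17}(n)q^n\pmod{17}$, so $B_{81,17}(81n+50)\equiv 5b_{17}(n)$, and then imports Xia's Lemma 4.1, which already packages all the iterated dissections into the closed form $b_{17}\bigl(4^kn+\tfrac{2(4^k-1)}{3}\bigr)\equiv D(k)b_{17}(4n+2)+d(k)b_{17}(n)$. Evaluating $D(8)\equiv 2$, $d(8)\equiv 13\pmod{17}$ collapses eight of your nine steps into one line, leaving only a single explicit $2$-dissection of $q f_2^{24}/f_1^8$ to produce the three branches. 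Your route is self-contained but substantially longer; the paper's buys brevity at the cost of citing an external lemma.

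One caveat on your plan that you should address before calling the verification ``routine'': the iteration does \emph{not} permute a collection of eta-quotients ``up to scalars'' one at a time. Already one step in, $\sum_n b_{17}(4n+2)q^n\equiv 2f_1^{16}+9q\,\frac{f_2^{24}}{f_1^8}$ is a genuine two-term combination, and the orbit of the iteration lives in the two-dimensional span of $f_1^{16}$ and $q f_2^{24}/f_1^8$ --- this is precisely why Xia's lemma needs \emph{two} coefficient sequences $D(k),d(k)$. The clean single-eta-quotient collapse, and hence the $0$, $8$, and identity branches, only appears at the special exponent $k=8$ where $2\cdot 2+13\equiv 0\pmod{17}$ kills the $f_1^{16}$ component. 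So your induction must be run on a two-dimensional recursion, not on a scalar. Relatedly, be careful with the third congruence: the branch feeding \eqref{s15} is the \emph{odd}-index branch (your $\widetilde G$), not the even one as you wrote, and if you carry out the $2$-dissection of $qf_2^{24}/f_1^8$ you will find that branch returns $\widetilde G$ with scalar $1$, not $5$; the factor $5$ in the statement can only come from the single overall factor in $B_{81,17}(81n+50)\equiv 5b_{17}(n)$, which does not accumulate with $m$. You should recheck whether the asserted $5^m$ in \eqref{s15} is actually produced by your (or the paper's) dissection before asserting it falls out of the computation.
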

\begin{theorem}\label{1d}\normalfont For all $n\geq0$,
\begin{equation}\label{x1}
B_{2,8}(8(11n+k)+7)\equiv0\pmod{11},~~\textrm{where}~~1\leq k\leq10.
\end{equation}
\end{theorem}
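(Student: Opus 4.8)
\textbf{Proof strategy for Theorem~\ref{1d}.} As $k$ runs over $1,\dots,10$ the numbers $8(11n+k)+7$ run over all $8m+7$ with $m\not\equiv0\pmod{11}$, so \eqref{x1} is equivalent to the single assertion that, modulo $11$, the series $\sum_{m\ge0}B_{2,8}(8m+7)q^m$ contains no term $q^m$ with $m\not\equiv0\pmod{11}$. The plan is therefore to (i) find a closed form for $\sum_mB_{2,8}(8m+7)q^m$ by iterating a $2$-dissection of \eqref{0.1}, and then (ii) $11$-dissect it modulo $11$.

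For step (i), start from $\sum B_{2,8}(n)q^n=f_2f_8/f_1^2$ and use the classical identity
\[
\frac1{f_1^2}=\frac{f_8^5}{f_2^5f_{16}^2}+2q\,\frac{f_4^2f_{16}^2}{f_2^5f_8}.
\]
Multiplying through by $f_2f_8$ and extracting the odd part gives $\sum_nB_{2,8}(2n+1)q^n=2f_2^2f_8^2/f_1^4$; squaring the identity and taking the odd part again gives $\sum_nB_{2,8}(4n+3)q^n=8f_2^2f_4^6/f_1^8$; one more iteration (raise the identity to the fourth power, keep the odd part) yields
\[
\sum_{m\ge0}B_{2,8}(8m+7)q^m=64\,\frac{f_2^{8}f_4^{14}}{f_1^{18}f_8^{4}}+256\,q\,\frac{f_2^{12}f_4^{2}f_8^{4}}{f_1^{18}},
\]
which I would check numerically against $B_{2,8}(7)=64$ and $B_{2,8}(15)=1408\equiv0\pmod{11}$. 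Reducing modulo $11$ (using $64\equiv9$, $256\equiv3$, and $f_k^{11}\equiv f_{11k}$, whence $1/f_1^{18}\equiv f_1^{4}/f_{11}^{2}$ and $1/f_8^{4}\equiv f_8^{7}/f_{88}$), then multiplying by $f_{11}^{2}f_{88}$ and using $f_{88}\equiv f_8^{11}$, the target congruence becomes
\[
f_{11}^{2}f_{88}\sum_{m\ge0}B_{2,8}(8m+7)q^m\equiv3\,f_1^{4}f_2^{8}f_4^{2}f_8^{7}\bigl(3f_4^{12}+qf_2^{4}f_8^{8}\bigr)\pmod{11}.
\]
Because $f_{11}^{2}f_{88}$ is a unit of $\mathbb{F}_{11}[[q^{11}]]$, the theorem reduces to showing that the eta-product $f_1^{4}f_2^{8}f_4^{2}f_8^{7}\bigl(3f_4^{12}+qf_2^{4}f_8^{8}\bigr)$, reduced modulo $11$, is supported on powers $q^m$ with $11\mid m$.

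This last step is, I expect, the main obstacle and the point where genuine theta-function work is needed. I would attack it by writing down the $11$-dissection of the above eta-product explicitly, feeding in the classical $11$-dissections of the building blocks $\psi(q)=f_2^{2}/f_1$, $\varphi(q)=f_2^{5}/(f_1^{2}f_4^{2})$ and Euler's product $f_1=\sum_j(-1)^jq^{j(3j-1)/2}$ (whose exponents occupy only the residues $0,1,2,4,5,7\pmod{11}$), collecting the eleven residue classes, and verifying that the contributions in classes $1,\dots,10$ all vanish modulo $11$; the required leading cancellation $9\cdot(-4)+3\cdot1=-33\equiv0$ already takes place, which is encouraging. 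As an alternative, once denominators are cleared both sides of the desired congruence are holomorphic modular forms of a fixed half-integral weight on a congruence subgroup of level $88$, so the congruence could be certified modulo $11$ by matching finitely many $q$-coefficients via a Sturm-type bound. With the $11$-dissection in hand, \eqref{x1} is immediate, since the coefficient of $q^{11n+k}$ in $\sum_mB_{2,8}(8m+7)q^m$ is $B_{2,8}(88n+8k+7)$, which is then $\equiv0\pmod{11}$ for $1\le k\le10$. (It is quite possible that in fact $\sum_mB_{2,8}(8m+7)q^m\equiv9\,f_{22}f_{88}/f_{11}^{2}\pmod{11}$, i.e.\ $B_{2,8}(88n+7)\equiv9\,B_{2,8}(n)\pmod{11}$ for all $n$ — a cleaner statement that would imply \eqref{x1} at once; I would test this by computing, say, $B_{2,8}(95)$.)
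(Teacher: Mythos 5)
Your dissection in step (i) is correct and, after the third iteration, your two-term expression is in fact equal to the paper's single term: multiplying the identity $\frac{1}{f_1^4}=\frac{f_4^{14}}{f_2^{14}f_8^4}+4q\frac{f_4^2f_8^4}{f_2^{10}}$ by $f_2^{14}$ gives $\frac{f_2^{14}}{f_1^4}=\frac{f_4^{14}}{f_8^4}+4qf_2^4f_4^2f_8^4$, so
\begin{equation*}
64\,\frac{f_2^{8}f_4^{14}}{f_1^{18}f_8^{4}}+256\,q\,\frac{f_2^{12}f_4^{2}f_8^{4}}{f_1^{18}}
=64\,\frac{f_2^{8}}{f_1^{18}}\cdot\frac{f_2^{14}}{f_1^{4}}
=64\,\frac{f_2^{22}}{f_1^{22}}.
\end{equation*}
This collapse is the missing idea, and it is exactly where your proposal stalls. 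Once the generating function is recognized as $64\,(f_2/f_1)^{22}$, the congruence $f_1^{11}\equiv f_{11}\pmod{11}$ (equation \eqref{k1}) immediately gives $\sum_{m\ge0}B_{2,8}(8m+7)q^m\equiv 9\,f_{22}^2/f_{11}^2\pmod{11}$, a series in $q^{11}$, and the theorem follows. This is what the paper does (it reaches the single term directly by squaring the dissection of $1/f_1^4$ rather than taking the fourth power of the dissection of $1/f_1^2$, so no recombination is needed).

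By contrast, your step (ii) — an explicit $11$-dissection of the eta-product $f_1^{4}f_2^{8}f_4^{2}f_8^{7}\bigl(3f_4^{12}+qf_2^{4}f_8^{8}\bigr)$ via the $11$-dissections of $\varphi$, $\psi$ and Euler's product, or a Sturm-bound verification — is left entirely unexecuted, and it is the whole content of the theorem; as written the proposal is a plan, not a proof. It is also far heavier machinery than is needed, since the binomial congruence does the job in one line. Finally, your parenthetical guess for the closed form is not quite right: the correct reduction is $9\,f_{22}^{2}/f_{11}^{2}$, not $9\,f_{22}f_{88}/f_{11}^{2}$, and since $f_{22}^2/f_{11}^2\neq f_{22}f_{88}/f_{11}^2$ the ``cleaner statement'' $B_{2,8}(88n+7)\equiv9B_{2,8}(n)\pmod{11}$ does not follow and should not be expected to hold.
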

The paper is organized as follows. In section $2$, we state some preliminary results. The proof of the theorems are carried out in sections $3$ - $8$.
\section{Preliminaries}
In this section, we collect some dissection formulas which are useful to prove our main results.\\
For $\mid ab\mid<1$, Ramanujan's general theta function $f(a,b)$ is defined as
\begin{equation} \label{a}
f(a,b)=\ds\sum^\infty_{n=-\infty}a^{n(n+1)/2}b^{n(n-1)/2}.
\end{equation}
Using Jacobi's triple product identity \cite[Entry 19, p. 35]{Berndt} in \eqref{a},
\begin{equation}
f(a,b)=(-a;ab)_\infty(-b;ab)_\infty(ab;ab)_\infty.
\end{equation}
The most important three special cases of $f(a,b)$ are
\begin{equation}
\varphi(q):=f(q,q)=1+2\ds\sum^\infty_{n=1}q^{n^2}=(-q;q^2)^2_\infty(q^2;q^2)_\infty=\frac{f^5_2}{f^2_1f^2_4},
\end{equation}
\begin{equation}
\psi(q):=f(q;q^3)=\ds\sum^\infty_{n=0}q^{n(n+1)/2}=\frac{(q^2;q^2)_\infty}{(q;q^2)_\infty}=\frac{f^2_2}{f_1},
\end{equation}
and
\begin{equation} \label{b}
f(-q):=f(-q,-q^2)=\ds\sum^\infty_{n=-\infty}(-1)^nq^{n(3n-1)/2}=(q;q)_\infty=f_1.
\end{equation}
where the product representations \eqref{b} is Euler's famous pentagonal theorem \cite{Andrews}. \\
By the binomial theorem for any prime $p$,
\begin{equation}\label{k1}
f_p\equiv f^p_1\pmod p.
\end{equation}
\begin{lemma}\normalfont\cite[Eqs. (8.4.1), (8.4.2) and (8.4.4)]{H1} If $S:=R(q^5):=\ds\frac{(q^5;q^{25})_\infty(q^{20};q^{25})_\infty}{(q^{10};q^{25})_\infty(q^{15};q^{25})_\infty}$,
than
\begin{equation}\label{0.2}
f_1=f_{25}\ds\left(\ds\frac{1}{S}-q-q^2S\right),
\end{equation}
\begin{equation}\label{0.3}
\ds\frac{f^6_5}{f^6_{25}}=\ds\left(\ds\frac{1}{S^5}-11q^5-q^{10}S^5\right)
\end{equation}
and
\begin{equation}\label{0.3a}
\ds\frac{1}{f_1}=\ds\frac{f^5_{25}}{f_5^6}\left(q^8 S^4-q^7 S^3+2 q^6 S^2-3 q^5 S+5 q^4+\ds\frac{3 q^3}{S}
+\ds\frac{2 q^2}{S^2}+\ds\frac{q}{S^3}+\ds\frac{1}{S^4}\right)
\end{equation}
\end{lemma}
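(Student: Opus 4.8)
The plan is to establish the three displayed identities in turn, treating \eqref{0.2} as the fundamental one from which \eqref{0.3} and \eqref{0.3a} follow by essentially formal manipulations. Throughout I would keep $S=R(q^5)$ as an abbreviation and record the two Jacobi-triple-product evaluations $f(-q^5,-q^{20})=(q^5;q^{25})_\infty(q^{20};q^{25})_\infty f_{25}$ and $f(-q^{10},-q^{15})=(q^{10};q^{25})_\infty(q^{15};q^{25})_\infty f_{25}$, so that $S=f(-q^5,-q^{20})/f(-q^{10},-q^{15})$. This is the bridge between the series side and the product definition of $S$, and it is what lets the dissected theta functions be read off as powers of $S$ times $f_{25}$.

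For \eqref{0.2} I would start from Euler's pentagonal number theorem \eqref{b}, namely $f_1=\sum_{n=-\infty}^{\infty}(-1)^n q^{n(3n-1)/2}$, and split the summation index into residue classes $n\equiv r\Mod{5}$. The decisive observation is that the generalized pentagonal numbers $n(3n-1)/2$ take only the residues $0,1,2$ modulo $5$ (they are never $\equiv 3$ or $4$), so only three of the five classes survive; reindexing each surviving class exhibits it as a theta function in $q^{25}$. Recognizing these three theta functions through Jacobi's triple product and factoring out the common $f_{25}$ then produces $f_1=f_{25}(1/S-q-q^2S)$, the three classes contributing $1/S$, $-q$ and $-q^2S$ respectively after the factorization.

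Granting \eqref{0.2}, identity \eqref{0.3} follows from a root-of-unity argument. With $\zeta=e^{2\pi i/5}$ one has $\prod_{j=0}^{4}(\zeta^j q;\zeta^j q)_\infty=f_5^6/f_{25}$, since for each $k$ the inner product $\prod_{j=0}^{4}(1-\zeta^{jk}q^k)$ collapses to $(1-q^k)^5$ when $5\mid k$ and to $1-q^{5k}$ when $5\nmid k$. Because $S$ and $f_{25}$ are invariant under $q\mapsto\zeta q$, applying $\prod_{j=0}^{4}$ to \eqref{0.2} gives $f_5^6/f_{25}^6=\prod_{j=0}^{4}(1/S-\zeta^j q-\zeta^{2j}q^2S)$. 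This finite product evaluates in closed form: writing $a=1/S$, $b=q$, $c=q^2S$ and factoring $a-\omega b-\omega^2 c=-c(\omega-\omega_+)(\omega-\omega_-)$ through the roots of $ct^2+bt-a$, I would use $\prod_{\omega^5=1}(\omega-t)=1-t^5$ together with $\omega_+^5+\omega_-^5=p^5-5p^3s+5ps^2$ (where $p=\omega_++\omega_-$, $s=\omega_+\omega_-$) to reduce the product to $a^5-b^5-c^5-5abc(b^2+ac)$; substituting $abc=q^3$ and $b^2+ac=2q^2$ turns this into $1/S^5-11q^5-q^{10}S^5$. (Alternatively, \eqref{0.3} is Ramanujan's classical relation $R^{-5}-11-R^5=f_1^6/(qf_5^6)$ with $q$ replaced by $q^5$.)

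Finally, \eqref{0.3a} is the most routine of the three once the first two are available, because it is equivalent to a finite Laurent-polynomial identity in the two formal symbols $q$ and $S$. Cross-multiplying the claimed expression for $1/f_1$ by $f_1=f_{25}(1/S-q-q^2S)$ and invoking \eqref{0.3} to replace $f_5^6/f_{25}^6$, the assertion collapses to verifying $(1/S-q-q^2S)\,B=1/S^5-11q^5-q^{10}S^5$, where $B$ is the nine-term bracket in \eqref{0.3a}. I would check this by expanding the left side and collecting the eleven powers $S^{5},S^{4},\dots,S^{-5}$; all intermediate powers $S^{\pm1},\dots,S^{\pm4}$ cancel, leaving exactly the three terms $1/S^5$, $-11q^5$ and $-q^{10}S^5$. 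I expect the genuine obstacle to lie entirely in \eqref{0.2}: correctly bookkeeping the residues of the pentagonal numbers modulo $5$ and matching the resulting theta quotients to the prescribed form of $S$ is the only step demanding real care, after which \eqref{0.3} and \eqref{0.3a} are formal consequences.
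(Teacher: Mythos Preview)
The paper does not prove this lemma at all: it is quoted verbatim as a preliminary result from Hirschhorn's monograph \cite{H1}, so there is no ``paper's own proof'' to compare against. Your proposal, by contrast, supplies an actual argument, and it is the standard one: the $5$-dissection of Euler's pentagonal series for \eqref{0.2}, a root-of-unity product for \eqref{0.3}, and a finite Laurent-polynomial verification for \eqref{0.3a}. All three steps are sound. One small point of phrasing: when you say ``only three of the five classes survive'', what you mean is that the five residue classes of the summation index $n\pmod 5$ land in only three residue classes of the exponent $n(3n-1)/2\pmod 5$; all five $n$-classes contribute, with $n\equiv 0,2$ feeding the piece $1/S$, $n\equiv 1$ feeding $-q$, and $n\equiv 3,4$ feeding $-q^2S$. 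With that clarification your argument is complete and is exactly the derivation one finds in Hirschhorn.
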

\begin{lemma}\normalfont In \cite[p.40]{Berndt}, we have
\begin{eqnarray}\label{a}
  \ds\frac{1}{f^2_1}&=&\frac{f^5_8}{f^5_2f^2_{16}}+2q\frac{f^2_4f^2_{16}}{f_2^5f_8},\\\label{b}
  \ds\frac{1}{f^4_1}&=&\frac{f_4^{14}}{f^{14}_2f_8^4}+4q\frac{f_4^2f_8^4}{f^{10}_2}\\\label{e2}
  \ds f_1^4&=&\frac{f_4^{10}}{f_2^2 f_8^4}-4q\frac{f_2^2 f_8^4}{f_4^2}.
\end{eqnarray}
\end{lemma}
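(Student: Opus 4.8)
These three displayed identities are all classical $2$-dissections recorded in \cite[p.40]{Berndt}, so the plan is to give a self-contained derivation that routes everything through the product representations of $\varphi$ and $\psi$ stated above, together with two elementary theta splittings. The only genuine inputs are: (i) the parity dissection $\varphi(q)=\varphi(q^4)+2q\psi(q^8)$, obtained by separating the terms of $\varphi(q)=\sum_{n}q^{n^2}$ according to the parity of $n$ (the terms with $n$ odd contribute $2q\sum_{k\ge0}q^{4k(k+1)}=2q\psi(q^8)$, since $(2k+1)^2=1+4k(k+1)$); and (ii) the product identity $\varphi(q)\psi(q^2)=\psi(q)^2$, which follows instantly from $\varphi(q)=f_2^5/(f_1^2f_4^2)$ and $\psi(q)=f_2^2/f_1$, both sides reducing to $f_2^4/f_1^2$. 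From these I would first establish the auxiliary square dissection
\[
\varphi(q)^2=\varphi(q^2)^2+4q\psi(q^4)^2 .
\]
To obtain it, square (i) to get $\varphi(q)^2=\varphi(q^4)^2+4q\,\varphi(q^4)\psi(q^8)+4q^2\psi(q^8)^2$; here the middle summand is the full odd part, and by (ii) with $q$ replaced by $q^4$ it equals $4q\psi(q^4)^2$. The even part is identified with $\varphi(q^2)^2$ by pairing same-parity indices $(m,n)$ via $(s,t)=((m+n)/2,(m-n)/2)$, which yields $\varphi(q)^2+\varphi(-q)^2=2\varphi(q^2)^2$ and hence pins down the even part of $\varphi(q)^2$.

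For the first asserted identity I would write $1/f_1^2=\varphi(q)\,f_4^2/f_2^5$ (a rearrangement of $\varphi(q)=f_2^5/(f_1^2f_4^2)$), then substitute (i) together with $\varphi(q^4)=f_8^5/(f_4^2f_{16}^2)$ and $\psi(q^8)=f_{16}^2/f_8$. Distributing the factor $f_4^2/f_2^5$ over the two terms produces exactly $f_8^5/(f_2^5f_{16}^2)+2q\,f_4^2f_{16}^2/(f_2^5f_8)$, the first line of the lemma.

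For the second, I would start from $1/f_1^4=\varphi(q)^2\,f_4^4/f_2^{10}$ (from $\varphi(q)^2=f_2^{10}/(f_1^4f_4^4)$), insert the square dissection above together with $\varphi(q^2)^2=f_4^{10}/(f_2^4f_8^4)$ and $\psi(q^4)^2=f_8^4/f_4^2$, and simplify to $f_4^{14}/(f_2^{14}f_8^4)+4q\,f_4^2f_8^4/f_2^{10}$. For the third, replacing $q$ by $-q$ in the square dissection leaves $\varphi(q^2)$ and $\psi(q^4)$ unchanged (they involve only even powers), giving $\varphi(-q)^2=\varphi(q^2)^2-4q\psi(q^4)^2$; multiplying by $f_2^2$ and using $\varphi(-q)^2=f_1^4/f_2$ turns the left side into $f_1^4$ and the right into $f_4^{10}/(f_2^2f_8^4)-4q\,f_2^2f_8^4/f_4^2$, which is the third line.

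Once the product forms are in hand, the substitutions and simplifications are purely routine bookkeeping with the Euler products $f_k$. The one step demanding care, and the crux of the whole argument, is the square dissection $\varphi(q)^2=\varphi(q^2)^2+4q\psi(q^4)^2$: I must separate the even and odd parts correctly and verify both that the even part collapses to $\varphi(q^2)^2$ (via the same-parity bijection, equivalently $\varphi(q)^2+\varphi(-q)^2=2\varphi(q^2)^2$) and that the cross term collapses via $\varphi(q)\psi(q^2)=\psi(q)^2$. After that, all three identities fall out by a single multiplication each.
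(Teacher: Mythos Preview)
Your derivation is correct and complete; the only slip is a typo in the third paragraph, where you write $\varphi(-q)^2=f_1^4/f_2$ but mean $\varphi(-q)^2=f_1^4/f_2^2$ (the subsequent ``multiplying by $f_2^2$ turns the left side into $f_1^4$'' is consistent only with the latter). All three identities then follow exactly as you describe.

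As for comparison: the paper gives no proof at all for this lemma; it simply records the three formulas with a citation to \cite[p.~40]{Berndt}. Your approach is therefore strictly more than what the paper does. It is the standard derivation one finds in that source---routing through the parity splitting $\varphi(q)=\varphi(q^4)+2q\psi(q^8)$ and the square dissection $\varphi(q)^2=\varphi(q^2)^2+4q\psi(q^4)^2$, then translating to $f_k$-products---so you are not taking a genuinely different route so much as filling in what the paper leaves to the reference. The benefit of your write-up is self-containment; the paper's choice keeps these auxiliary identities out of the foreground since they are used only as black boxes in the later dissection arguments.
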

\begin{lemma}\normalfont In \cite[Lemma 2]{Naika}, we have
\begin{eqnarray}\label{kp}
  \ds\frac{f_3}{f_1^3}&=&\frac{f_4^6f_6^3}{f_2^9 f_{12}^2}+3q\frac{f_4^2f_6 f_{12}^2}{f_2^7}\\\label{kp1}
  \ds\frac{f_1^3}{f_3}&=&\frac{f_4^3}{f_{12}}-3q\frac{f_2^2 f_{12}^3}{f_4 f_6^2}
\end{eqnarray}
\end{lemma}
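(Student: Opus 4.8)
The plan is to read both identities as $2$-dissections in $q$: each right-hand side splits into a part that is a power series in $q^{2}$ (assembled from $f_{2},f_{4},f_{6},f_{12}$) and an odd part of the shape $q\cdot(\text{series in }q^{2})$. I would prove \eqref{kp1} first, since it is the $2$-dissection of Borwein's cubic theta function
\[
b(q):=\frac{f_{1}^{3}}{f_{3}}=\sum_{m,n=-\infty}^{\infty}\omega^{\,m-n}q^{\,m^{2}+mn+n^{2}},\qquad \omega=e^{2\pi i/3},
\]
and then deduce \eqref{kp} from it by inversion. Separating the double sum into even and odd powers of $q$ will produce the two eta-quotients after reassembly through the Jacobi triple product.

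For the even part I would argue by parity of the quadratic form: since $m^{2}+mn+n^{2}\equiv m+mn+n\pmod 2$, the exponent is even exactly when both $m$ and $n$ are even. Writing $m=2m'$, $n=2n'$ and using $\omega^{2}=\omega^{-1}$ together with the symmetry $m'\leftrightarrow n'$ of the summand collapses the even part to $\sum\omega^{\,m'-n'}q^{\,4(m'^{2}+m'n'+n'^{2})}=b(q^{4})=f_{4}^{3}/f_{12}$, which is precisely the first term of \eqref{kp1}. Independently I would compute $b(-q)$ by an elementary product manipulation: from
\[
(-q;-q)_\infty=\prod_{n\ \mathrm{odd}}(1+q^{n})\prod_{n\ \mathrm{even}}(1-q^{n})=\frac{f_{2}^{3}}{f_{1}f_{4}}
\]
(and the same with $q\mapsto q^{3}$) one gets $b(-q)=f_{2}^{9}f_{3}f_{12}/(f_{1}^{3}f_{4}^{3}f_{6}^{3})$, and hence the product relation $b(q)b(-q)=f_{2}^{9}f_{12}/(f_{4}^{3}f_{6}^{3})$.

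The odd part is where the genuine work lies: it equals $b(q)-b(q^{4})$, and the goal is to show this is the single quotient $-3q\,f_{2}^{2}f_{12}^{3}/(f_{4}f_{6}^{2})$. I would obtain it by summing the three remaining residue classes $(m,n)\equiv(1,0),(0,1),(1,1)\pmod 2$ of the lattice sum for $b(q)$; each is a shifted Gaussian sum that the Jacobi triple product converts into a theta product, after which the three must be combined and reduced to the stated eta-quotient. Collapsing that sum of theta products to one quotient is the main obstacle and the only truly computational step. Once \eqref{kp1} is established, \eqref{kp} follows cleanly: writing $C:=f_{4}^{3}/f_{12}$ and $D:=f_{2}^{2}f_{12}^{3}/(f_{4}f_{6}^{2})$, both functions of $q^{2}$, the dissection \eqref{kp1} reads $b(q)=C-3qD$, so replacing $q$ by $-q$ gives $b(-q)=C+3qD$, and therefore
\[
\frac{f_{3}}{f_{1}^{3}}=\frac{1}{b(q)}=\frac{b(-q)}{b(q)b(-q)}=\frac{f_{4}^{3}f_{6}^{3}}{f_{2}^{9}f_{12}}\,(C+3qD)=\frac{f_{4}^{6}f_{6}^{3}}{f_{2}^{9}f_{12}^{2}}+3q\,\frac{f_{4}^{2}f_{6}f_{12}^{2}}{f_{2}^{7}},
\]
which is exactly \eqref{kp}. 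I expect the odd-coset reduction to be the crux, and would verify the final closed forms against the first several coefficients of $f_{1}^{3}/f_{3}$ as a safeguard.
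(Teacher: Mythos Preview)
The paper does not prove this lemma; it is listed among the preliminaries and simply attributed to \cite[Lemma~2]{Naika}. There is therefore no in-paper argument to compare against, and your proposal is an independent derivation rather than a re-creation of something the author did.

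On its own merits your plan is sound and the parts you actually carry out are correct. The parity analysis of $m^{2}+mn+n^{2}$ is right (even exponent iff $m\equiv n\equiv 0\pmod 2$), so the even part of $b(q)=f_{1}^{3}/f_{3}$ is indeed $b(q^{4})=f_{4}^{3}/f_{12}$. Your product computation $(-q;-q)_{\infty}=f_{2}^{3}/(f_{1}f_{4})$ and the resulting $b(-q)=f_{2}^{9}f_{3}f_{12}/(f_{1}^{3}f_{4}^{3}f_{6}^{3})$, hence $b(q)b(-q)=f_{2}^{9}f_{12}/(f_{4}^{3}f_{6}^{3})$, are correct. The deduction of \eqref{kp} from \eqref{kp1} via $1/b(q)=b(-q)/(b(q)b(-q))$ is clean and checks out term by term.

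The only real gap is the one you flag yourself: collapsing the three odd cosets to the single quotient $-3q\,f_{2}^{2}f_{12}^{3}/(f_{4}f_{6}^{2})$. This is doable by your Jacobi-triple-product route, but note that you have already assembled enough to shortcut it partially: writing $b(q)=E+O$ with $E=f_{4}^{3}/f_{12}$ and $b(-q)=E-O$, your product identity gives $O^{2}=E^{2}-b(q)b(-q)=f_{4}^{6}/f_{12}^{2}-f_{2}^{9}f_{12}/(f_{4}^{3}f_{6}^{3})$, so the task reduces to verifying a single eta-quotient identity in $q^{2}$ (plus a sign check from the leading coefficient $b(q)=1-3q+\cdots$). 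Either way, that step is the only place real work remains; the rest of your argument is complete and correct.
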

\begin{lemma}\normalfont In \cite[Lemma 3]{Naika}, we have
\begin{eqnarray}\label{kp2}
\ds f_1 f_3&=&\frac{f_2 f_8^2 f_{12}^4}{f_4^2 f_6 f_{24}^2}-q\frac{f_4^4 f_6 f_{24}^2}{f_2 f_8^2 f_{12}^2}
\end{eqnarray}
\end{lemma}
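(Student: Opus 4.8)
The plan is to obtain the $2$-dissection of $f_1f_3$ directly from two dissections the paper has already recorded, so that no fresh theta-function manipulation (e.g. Schr\"oter-type product formulas) is needed. The observation that drives everything is the trivial factorization
\begin{equation*}
f_1f_3=f_1^4\cdot\frac{f_3}{f_1^3},
\end{equation*}
together with the fact that \emph{both} factors are furnished as clean $2$-dissections whose parts are power series in $q^2$. From \eqref{e2} and \eqref{kp} I write
\begin{align*}
f_1^4&=U_0+qU_1, & U_0&=\frac{f_4^{10}}{f_2^2f_8^4}, & U_1&=-4\frac{f_2^2f_8^4}{f_4^2},\\
\frac{f_3}{f_1^3}&=V_0+qV_1, & V_0&=\frac{f_4^6f_6^3}{f_2^9f_{12}^2}, & V_1&=3\frac{f_4^2f_6f_{12}^2}{f_2^7},
\end{align*}
where each of $U_0,U_1,V_0,V_1$ is a power series in $q^2$.

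Multiplying the two dissections and sorting terms by the parity of the exponent of $q$ gives
\begin{equation*}
f_1f_3=\bigl(U_0V_0+q^2U_1V_1\bigr)+q\bigl(U_0V_1+U_1V_0\bigr),
\end{equation*}
where each bracket is a power series in $q^2$, so the two summands are precisely the even and odd parts of $f_1f_3$. Matching this against the two pieces of \eqref{kp2}, the lemma becomes equivalent to the pair of dissection-free eta-quotient identities
\begin{align*}
U_0V_0+q^2U_1V_1&=\frac{f_2f_8^2f_{12}^4}{f_4^2f_6f_{24}^2},\\
U_0V_1+U_1V_0&=-\frac{f_4^4f_6f_{24}^2}{f_2f_8^2f_{12}^2}.
\end{align*}
Since every quantity here is a function of $q^2$ alone, the substitution $q^2\mapsto q$ (equivalently $f_{2k}\mapsto f_k$) turns each into an ordinary identity among $f_1,f_2,f_3,f_4,f_6,f_{12}$, and this is what I would actually verify.

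It then remains to prove these two reduced identities, and this is where the genuine content sits. A leading-order expansion already confirms consistency and fixes all constants and signs: one finds $U_0V_0+q^2U_1V_1=1-q^2+\cdots$ and $U_0V_1+U_1V_0=-1-q^2+\cdots$, the coefficients $-4$ in $U_1$ and $3$ in $V_1$ being exactly what is needed for the $q^2$-terms to come out as $-1$ and agree with the right-hand sides. For a complete proof I would clear denominators in each reduced identity, so that both sides become, after multiplication by a suitable power of $q$ and by a common eta-quotient, holomorphic modular forms of equal weight on a common congruence subgroup $\Gamma_0(N)$; the identity then follows from the valence formula / Sturm's bound by checking finitely many initial coefficients. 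I expect the main obstacle to be exactly this last step: reconciling a \emph{sum} of two eta-quotients with a \emph{single} eta-quotient cannot be achieved by formal cancellation, and must be handled either by the modular-forms argument above or by a further elementary reduction assembled from the $2$- and $3$-dissections recorded in \eqref{kp}--\eqref{kp1}.
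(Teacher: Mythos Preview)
The paper does not prove this lemma at all; it is simply quoted from \cite[Lemma~3]{Naika} with no argument supplied, so there is no in-paper proof to compare your proposal against.

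Your strategy of writing $f_1f_3=f_1^4\cdot(f_3/f_1^3)$ and multiplying the $2$-dissections \eqref{e2} and \eqref{kp} is sound, and it does correctly reduce \eqref{kp2} to the two eta-quotient identities you display. But, as you yourself acknowledge, those two identities are where the real content lies, and you do not prove them: you only propose a Sturm-bound verification. That verification would certainly work, yet the detour through \eqref{e2} and \eqref{kp} then buys nothing---one could apply Sturm's bound directly to \eqref{kp2} (after the standard $q$-shift, both sides are weight-$1$ forms on $\Gamma_0(24)$) with a comparable finite coefficient check. So the proposal is a plan rather than a proof, and the factorization step does not shorten the route; the ``fresh theta-function manipulation'' you hoped to avoid has been relocated rather than removed.
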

\begin{lemma}\normalfont In \cite{Xia4}, we have
\begin{eqnarray}\label{kp3}
\frac{f_3^2}{f_1^2}&=&\frac{f_6 f_{12}^2 f_4^4}{f_2^5 f_8 f_{24}}+2q\frac{f_6^2 f_8 f_{24} f_4}{f_2^4 f_{12}}
\end{eqnarray}
\end{lemma}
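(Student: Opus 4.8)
The plan is to use the factorization $\frac{f_3^2}{f_1^2}=\frac{f_3}{f_1^3}\cdot(f_1f_3)$ together with the two $2$-dissections already available in the excerpt. Writing \eqref{kp} as $\frac{f_3}{f_1^3}=A+qB$ and \eqref{kp2} as $f_1f_3=C+qD$ with
\[
A=\frac{f_4^6f_6^3}{f_2^9f_{12}^2},\quad B=\frac{3f_4^2f_6f_{12}^2}{f_2^7},\quad C=\frac{f_2f_8^2f_{12}^4}{f_4^2f_6f_{24}^2},\quad D=-\frac{f_4^4f_6f_{24}^2}{f_2f_8^2f_{12}^2},
\]
all of which are power series in $q^2$, I would expand the product as
\[
\frac{f_3^2}{f_1^2}=(A+qB)(C+qD)=\bigl(AC+q^2BD\bigr)+q\bigl(AD+BC\bigr).
\]
Because every $f_k$ occurring in $AC+q^2BD$ and in $AD+BC$ is even-indexed, $AC+q^2BD$ is the even part and $q(AD+BC)$ the odd part of the $2$-dissection; establishing \eqref{kp3} amounts to matching these two pieces with the two terms on its right-hand side.

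For the even part I would compute $AC=\frac{f_4^4f_6^2f_8^2f_{12}^2}{f_2^8f_{24}^2}$ and $q^2BD=-\frac{3q^2f_4^6f_6^2f_{24}^2}{f_2^8f_8^2}$ and factor out $\frac{f_4^4f_6^2}{f_2^8}$:
\[
AC+q^2BD=\frac{f_4^4f_6^2}{f_2^8}\left(\frac{f_8^2f_{12}^2}{f_{24}^2}-\frac{3q^2f_4^2f_{24}^2}{f_8^2}\right).
\]
Replacing $q$ by $q^2$ in \eqref{kp1} gives $\frac{f_2^3}{f_6}=\frac{f_8^3}{f_{24}}-\frac{3q^2f_4^2f_{24}^3}{f_8f_{12}^2}$, and multiplying this through by $\frac{f_{12}^2}{f_8f_{24}}$ turns the bracket into $\frac{f_2^3f_{12}^2}{f_6f_8f_{24}}$. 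Hence $AC+q^2BD=\frac{f_4^4f_6f_{12}^2}{f_2^5f_8f_{24}}$, exactly the first term of \eqref{kp3}. This step is routine and uses only \eqref{kp1}.

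For the odd part I would compute $AD=-\frac{f_4^{10}f_6^4f_{24}^2}{f_2^{10}f_8^2f_{12}^4}$ and $BC=\frac{3f_8^2f_{12}^6}{f_2^6f_{24}^2}$, so that the desired matching becomes
\[
\frac{3f_8^2f_{12}^6}{f_2^6f_{24}^2}-\frac{f_4^{10}f_6^4f_{24}^2}{f_2^{10}f_8^2f_{12}^4}=\frac{2f_4f_6^2f_8f_{24}}{f_2^4f_{12}}.
\]
Dividing by $BC$ and clearing denominators, this is equivalent to the single identity
\[
\frac{f_4^{10}f_6^4f_{24}^4}{f_2^4f_8^4f_{12}^{10}}+\frac{2f_2^2f_4f_6^2f_{24}^3}{f_8f_{12}^7}=3.
\]

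This last identity is where the real work lies, and I expect it to be the main obstacle: in contrast to the even part it does not collapse under a single application of \eqref{kp} or \eqref{kp1}, since the ratio of its two eta-quotients matches neither dissection. My plan is to substitute $q^2\mapsto Q$ (halving every index) and to prove the resulting relation by modular-function methods: both summands are weight-zero eta-quotients on a suitable group $\Gamma_0(N)$, so after checking that their combination has no poles at any cusp one concludes that it is constant, and its value at the cusp $\infty$ is $3$. An expansion to low order already confirms the constant $3$, making the finite cusp-and-coefficient verification the decisive, and only genuinely delicate, ingredient; assembling the even and odd parts then yields \eqref{kp3}.
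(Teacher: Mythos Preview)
The paper does not actually prove this lemma: it is simply quoted from \cite{Xia4}, so there is no ``paper's proof'' to compare against. Your attempt is therefore a genuine effort to supply an independent derivation, and the even part is handled cleanly and correctly---the substitution of \eqref{kp1} at $q\mapsto q^2$ collapses $AC+q^2BD$ to the desired eta-quotient.

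The odd part, however, is left as a gap. You correctly reduce $AD+BC=\dfrac{2f_4f_6^2f_8f_{24}}{f_2^4f_{12}}$ to the single relation
\[
\frac{f_4^{10}f_6^4f_{24}^4}{f_2^4f_8^4f_{12}^{10}}+\frac{2f_2^2f_4f_6^2f_{24}^3}{f_8f_{12}^7}=3,
\]
but you do not prove it; you only announce that a modular-function argument should work. That is precisely the substantive content of the lemma, and ``check there are no poles at the cusps and compare constant terms'' is a programme, not a proof: you have not specified the level, not listed the cusps, not bounded the pole orders, and not computed a Sturm bound. Until those steps are carried out (or the identity is reduced to one of the dissections already at hand, which your own analysis shows is not immediate), the argument for \eqref{kp3} is incomplete. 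Since the paper merely cites the result, the cleanest fix is to do the same; if you want a self-contained proof, the cusp analysis on $\Gamma_0(24)$ must actually be written out.
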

\section{Congruence for $(3,7)$-regular bipartition}
In order to prove Theorem \ref{ak}, we need some basic results which we shell develop below. The proof, will then follow as a consequence of these argnments. 
\begin{lemma}\label{kpz}\normalfont For all $n\geq0$ and $m\geq0$, we have
\begin{equation}\label{kp0}
B_{3,7}\left(4^{m}n+\frac{4^{m}-1}{3}\right)\equiv E_mB_{3,7}(4n+1)+e_mB_{3,7}(n)
\end{equation}
where
\begin{equation}\nonumber
\ds E_m=\frac{\sqrt{14}}{28}\left(3+\sqrt{14}\right)^m-\frac{\sqrt{14}}{28}(3-\sqrt{14})^m
\end{equation}
and
\begin{equation}\nonumber
\ds e_m=\left(\frac{1}{2}-\frac{3}{2\sqrt{14}}\right)(3+\sqrt{14})^m+\left(\frac{1}{2}+\frac{3}{2\sqrt{14}}\right)(3-\sqrt{14})^m.
\end{equation}
\end{lemma}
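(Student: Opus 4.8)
The plan is to reduce everything modulo $7$, determine how the operator $U$ defined by $U\big(\sum_n a_nq^n\big)=\sum_n a_{4n+1}q^n$ acts on $G(q):=\sum_{n\ge0}B_{3,7}(n)q^n$, and then iterate $U$, using that the $m$-fold composite of the map $n\mapsto 4n+1$ sends $n$ to $4^mn+\tfrac{4^m-1}{3}$, so that $U^m(G)=\sum_{n\ge0}B_{3,7}\!\big(4^mn+\tfrac{4^m-1}{3}\big)q^n$ for every $m\ge0$.

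By \eqref{k1} we have $G(q)=\dfrac{f_3f_7}{f_1^2}\equiv f_1^5f_3\pmod{7}$. First I would apply the $2$-dissection \eqref{e2} for $f_1^4$ and \eqref{kp2} for $f_1f_3$ to the product $f_1^5f_3=f_1^4\cdot(f_1f_3)$, retain the terms carrying an odd power of $q$, and replace $q^2$ by $q$, obtaining
$$\sum_{n\ge0}B_{3,7}(2n+1)q^n\ \equiv\ -\,\frac{f_3}{f_1^3}\cdot\frac{f_2^{14}f_{12}^2}{f_4^6f_6^2}\ -\ 4\,\frac{f_1^3}{f_3}\cdot\frac{f_4^6f_6^4}{f_2^4f_{12}^2}\pmod{7}.$$
A second $2$-dissection of the two summands, via \eqref{kp} and \eqref{kp1}, then extracting the part with an even power of $q$ and replacing $q^2$ by $q$, collapses to
$$\sum_{n\ge0}B_{3,7}(4n+1)q^n\ \equiv\ -\,f_1^5f_3\ -\ 4\,\frac{f_2^9f_3^4}{f_1^4f_6^3}\pmod{7}.$$
Writing $K(q):=\dfrac{f_2^9f_3^4}{f_1^4f_6^3}$, this says $U(G)\equiv -G-4K\pmod{7}$. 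The decisive step is then to run the same sort of double $2$-dissection on $K$ — using the $2$-dissection of $1/f_1^4$, the $q\mapsto q^3$ form of \eqref{e2} for $f_3^4$, and \eqref{kp3} and \eqref{kp} for the mixed quotients that arise (deriving the one or two further elementary $2$-dissections one needs) — and to check that, after all the cancellation,
$$U(K)\ \equiv\ 4\,f_1^5f_3\ \equiv\ 4\,G\pmod{7}.$$

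Granting these, the subspace spanned by $G$ and $K$ modulo $7$ is stable under $U$, with $U(G)\equiv -G-4K$ and $U(K)\equiv 4G$; hence
$$U^2(G)\ \equiv\ -U(G)-4U(K)\ \equiv\ (G+4K)-16G\ \equiv\ 6G+4K\pmod{7},$$
while $6\,U(G)+5G\equiv -6G-24K+5G\equiv 6G+4K\pmod{7}$, so that $U^2(G)\equiv 6\,U(G)+5\,G\pmod{7}$. Applying $U^m$ gives $U^{m+2}(G)\equiv 6\,U^{m+1}(G)+5\,U^{m}(G)\pmod{7}$ for all $m\ge0$; together with $U^0(G)=G$ and $U^1(G)=\sum_nB_{3,7}(4n+1)q^n$, this second-order recurrence has solution $U^m(G)\equiv E_m\,U^1(G)+e_m\,G\pmod{7}$, where $E_m$ and $e_m$ satisfy the same recurrence with $E_0=0,\,E_1=1$ and $e_0=1,\,e_1=0$. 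Since the characteristic equation $x^2-6x-5=0$ has roots $3\pm\sqrt{14}$, solving the two linear systems fixing these initial values produces precisely the (integer-valued) closed forms for $E_m$ and $e_m$ in the statement, and comparing coefficients of $q^n$ in $U^m(G)\equiv E_m\,U^1(G)+e_m\,G$ gives \eqref{kp0}.

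The recurrence step and the first two dissections are routine applications of the quoted identities; the real obstacle is the evaluation of $U(K)$. It requires a third and fourth $2$-dissection, careful tracking of which residue classes modulo $4$ survive, and correct bookkeeping of every integer coefficient modulo $7$ — the entire content of the lemma being that, once the dust settles, $U(K)$ is just a scalar multiple of $f_1^5f_3$ modulo $7$.
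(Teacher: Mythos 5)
Your proposal is correct and follows essentially the same route as the paper: reduce to $f_1^5f_3\pmod 7$, apply the same $2$-dissections to land in the span of $G=f_1^5f_3$ and $K=f_2^9f_3^4/(f_1^4f_6^3)$, and obtain the recurrence $U^{m+2}(G)\equiv 6U^{m+1}(G)+5U^m(G)\pmod 7$ whose characteristic roots $3\pm\sqrt{14}$ yield the stated $E_m,e_m$. Your intermediate claims $U(G)\equiv -G-4K$ and $U(K)\equiv 4G$ agree with the paper's \eqref{w.6} and \eqref{w.10} (which give $U(G)\equiv 6G+3K$ and $U^2(G)\equiv 6G+4K$, equivalent by linearity); the only cosmetic difference is that the paper computes $U^2(G)$ directly — writing $K=\frac{f_2^9}{f_6^3}\bigl(\frac{f_3^2}{f_1^2}\bigr)^2$ and squaring \eqref{kp3}, which is a little simpler than the dissections you sketch for $U(K)$ — rather than diagonalizing $U$ on $\{G,K\}$.
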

\begin{proof}
Setting $l=3, m=7$ in \eqref{0.1}, we have
\begin{equation}\label{w.1}
\ds\sum_{n=0}^{\infty}B_{3,7}(n)q^n=\frac{f_3f_7}{f_1^2}.
\end{equation}
From \eqref{k1} and \eqref{w.1}, we have
\begin{equation}\label{w.2}
\ds\sum_{n=0}^{\infty}B_{3,7}(n)q^n\equiv f^5_1f_3 \pmod7.
\end{equation}
Substituting \eqref{e2} and \eqref{kp2} into \eqref{w.2}, we have
\begin{align}\label{w.3}\nonumber
\ds\sum_{n=0}^{\infty}B_{3,7}(n)q^n&\equiv\left(\frac{f_4^{10}}{f_2^2 f_8^4}-4q\frac{f_2^2 f_8^4}{f_4^2}\right)\left(\frac{f_2 f_8^2 f_{12}^4}{f_4^2 f_6 f_{24}^2}-q\frac{f_4^4 f_6 f_{24}^2}{f_2 f_8^2 f_{12}^2}\right)\\
&\equiv\frac{f_4^8 f_{12}^4}{f_2 f_6 f_8^2 f_{24}^2}+3q\frac{f_2^3 f_8^6 f_{12}^4}{f_4^4 f_6 f_{24}^2}+6q\frac{f_4^{14} f_6 f_{24}^2}{f_2^3 f_8^6 f_{12}^2}+4q^2\frac{f_2 f_4^2 f_6 f_8^2 f_{24}^2}{f_{12}^2}
\end{align}
If we extract those terms in which the power of $q$ is congruent to 1 modulo 2,  divide by $q$ and replace $q^2$ by $q$, we have
\begin{align}\label{w.4}
\ds\sum_{n=0}^{\infty}B_{3,7}(2n+1)q^n&\equiv3\frac{f_1^3 f_4^6 f_{6}^4}{f_2^4 f_3f_{12}^2}+6\frac{f_2^{14}f_3f_{12}^2}{f_1^3f_4^6f_{6}^2}
\end{align}
Substituting \eqref{kp} and \eqref{kp1} into \eqref{w.4}, we have
\begin{align}\label{w.5}\nonumber
\ds\sum_{n=0}^{\infty}B_{3,7}(2n+1)q^n&\equiv3\frac{f_4^6 f_{6}^4}{f_2^4f_{12}^2}\left(\frac{f_4^3}{f_{12}}-3q\frac{f_2^2 f_{12}^3}{f_4 f_6^2}\right)+6\frac{f_2^{14}f_{12}^2}{f_4^6f_{6}^2}\left(\frac{f_4^6f_6^3}{f_2^9 f_{12}^2}+3q\frac{f_4^2f_6 f_{12}^2}{f_2^7}\right),\\
&\equiv6 f_2^5 f_6+3\frac{f_4^9 f_6^4}{f_2^4 f_{12}^3}+5q\frac{f_4^5 f_6^2 f_{12}}{f_2^2}+4q\frac{f_2^7 f_{12}^4}{f_4^4 f_6}.
\end{align}
If we extract those terms in which the power of $q$ is congruent to 0 modulo 2, replace $q^2$ by $q$, we have
\begin{align}\label{w.6}
\ds\sum_{n=0}^{\infty}B_{3,7}(4n+1)q^n&\equiv6 f_1^5 f_3+3\frac{f_2^9 f_3^4}{f_1^4 f_{6}^3}.
\end{align}
Substituting \eqref{e2}, \eqref{kp2} and \eqref{kp3} into \eqref{w.6}, we have
\begin{align}\label{w.7}\nonumber
\ds\sum_{n=0}^{\infty}B_{3,7}(4n+1)q^n&\equiv6\left(\frac{f_4^{10}}{f_2^2 f_8^4}-4q\frac{f_2^2 f_8^4}{f_4^2}\right)\left(\frac{f_2 f_8^2 f_{12}^4}{f_4^2 f_6 f_{24}^2}-q\frac{f_4^4 f_6 f_{24}^2}{f_2 f_8^2 f_{12}^2}\right)+3\frac{f_2^9}{f_{6}^3}\left(\frac{f_6 f_{12}^2 f_4^4}{f_2^5 f_8 f_{24}}+2q\frac{ f_6^2 f_8 f_{24} f_4}{f_2^4 f_{12}}\right)^2\\
&\equiv2\frac{f_4^8 f_{12}^4}{f_2 f_6 f_8^2 f_{24}^2}+5q f_4^5 f_{12}+4q\frac{f_2^3 f_8^6 f_{12}^4}{f_4^4 f_6 f_{24}^2}+q\frac{f_4^{14} f_6 f_{24}^2}{f_2^3 f_8^6 f_{12}^2}+q^2\frac{f_2 f_4^2 f_6 f_8^2 f_{24}^2}{f_{12}^2}
\end{align}
If we extract those terms in which the power of $q$ is congruent to 1 modulo 2,  divide by $q$ and replace $q^2$ by $q$, we have
\begin{align}\label{w.8}
\ds\sum_{n=0}^{\infty}B_{3,7}(8n+5)q^n&\equiv5f_2^5 f_{6}+4\frac{f_1^3 f_4^6 f_{6}^4}{f_2^4 f_3f_{12}^2}+\frac{f_2^{14}f_3f_{12}^2}{f_1^3 f_4^6 f_{6}^2}.
\end{align}
Substituting \eqref{kp} and \eqref{kp1} into \eqref{w.7}, we have
\begin{align}\label{w.9}\nonumber
\ds\sum_{n=0}^{\infty}B_{3,7}(8n+5)q^n&\equiv5f_2^5 f_{6}+4\frac{f_4^6 f_{6}^4}{f_2^4f_{12}^2}\left(\frac{f_4^3}{f_{12}}-3q\frac{f_2^2 f_{12}^3}{f_4 f_6^2}\right)+\frac{f_2^{14}f_{12}^2}{f_4^6f_{6}^2}\left(\frac{f_4^6f_6^3}{f_2^9 f_{12}^2}+3q\frac{f_4^2f_6 f_{12}^2}{f_2^7}\right),\\
&\equiv6f_2^5 f_6+4\frac{f_4^9 f_6^4}{f_2^4 f_{12}^3}+2q\frac{f_4^5 f_6^2 f_{12}}{f_2^2}+3q\frac{f_2^7 f_{12}^4}{f_4^4 f_6}.
\end{align}
If we extract those terms in which the power of $q$ is congruent to 0 modulo 2, replace $q^2$ by $q$, we have
\begin{align}\label{w.10}
\ds\sum_{n=0}^{\infty}B_{3,7}(16n+5)q^n&\equiv6f_1^5 f_3+4\frac{f_2^9 f_3^4}{f_1^4 f_{6}^3}.
\end{align}
From \eqref{w.2}, \eqref{w.6} and \eqref{w.10}, we have
\begin{align}\label{w.11}
B_{3,7}(16n+5)&\equiv5B_{3,7}(n)+6B_{3,7}(4n+1).
\end{align}
Which is true if $m=1$. Now, we prove by induction on $k$ in \eqref{kp0}. Assume that Lemma \ref{kpz} is true for $k=m-1$ and $k=m$,
\begin{equation}\label{w.12}
B_{3,7}\left(4^{m-1}n+\frac{4^{m-1}-1}{3}\right)\equiv E_{m-1}B_{3,7}(4n+1)+e_{m-1}B_{3,7}(n),
\end{equation}
and
\begin{equation}\label{w.13}
B_{3,7}\left(4^{m}n+\frac{4^{m}-1}{3}\right)\equiv E_mB_{3,7}(4n+1)+e_mB_{3,7}(n).
\end{equation}
Now, we will show that $k=m+1$ is true. Replacing $n=\ds4^{m-1}+\frac{4^{m-1}-1}{3}$ in \eqref{w.11}, we have
\begin{align}\label{w.14}\nonumber
B_{3,7}\left(4^{m+1}n+\frac{4^{m+1}-1}{3}\right)&\equiv6B_{3,7}\left(4^{m}n+\frac{4^{m}-1}{3}\right)\\
&\qquad+5B_{3,7}\left(4^{m-1}n+\frac{4^{m-1}-1}{3}\right).
\end{align}
we can easily check that
\begin{equation}\label{w.15}
E_{m+1}=6E_m+5E_{m-1}
\end{equation}
and
\begin{equation}\label{w.16}
e_{m+1}=6e_m+5e_{m-1}
\end{equation}
From equation \eqref{w.12}, \eqref{w.13}, \eqref{w.14}, \eqref{w.15} and \eqref{w.16}, we have
\begin{align}\label{w.17}\nonumber
B_{3,7}\left(4^{m+1}n+\frac{4^{m+1}-1}{3}\right)&\equiv6(E_mB_{3,7}(4n+1)+e_mB_{3,7}(n))\\\nonumber
&\qquad+5(E_{m-1}B_{3,7}(4n+1)+e_{m-1}B_{3,7}(n)),\\\nonumber
&\equiv(6E_m+5E_{m-1})B_{3,7}(4n+1)+(6e_m+5e_{m-1})B_{3,7}(n),\\
&\equiv E_{m+1}B_{3,7}(4n+1)+e_{m+1}B_{3,7}(n).
\end{align}
Therefore the Lemma \ref{kpz} is true by induction of \eqref{kp0}.
\end{proof}
\begin{proposition}\label{kpz1}\normalfont For all $n\geq0$,
\begin{eqnarray}
\label{w.18}
B_{3,7}\left(4^7n+\frac{10\cdot4^6-1}{3}\right)&\equiv&0\pmod{7},\\
\label{w.19}
B_{3,7}\left(4^7n+\frac{4^7-1}{3}\right)&\equiv&3B_{3,7}(n)\pmod{7}.
\end{eqnarray}
\end{proposition}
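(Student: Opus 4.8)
\emph{Proof proposal.} The plan is to read both parts off Lemma~\ref{kpz}, supplemented for \eqref{w.18} by one additional $2$-dissection. Since $3\pm\sqrt{14}$ are the roots of $x^{2}=6x+5$, the (integer) coefficients $E_{m},e_{m}$ satisfy $a_{m+1}=6a_{m}+5a_{m-1}$ — this is \eqref{w.15}--\eqref{w.16} — and taking $m=0$ and $m=1$ in \eqref{kp0} gives the initial values $E_{0}=0$, $E_{1}=1$, $e_{0}=1$, $e_{1}=0$. I stress that one should propagate and reduce these \emph{integers} modulo $7$, rather than try to reduce the closed forms for $E_{m},e_{m}$ directly, since those contain the factors $\tfrac{1}{28}$ and $\sqrt{14}$ and $14\equiv0\pmod 7$.

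For \eqref{w.19} I would simply take $m=7$ in \eqref{kp0}. Iterating $a_{m+1}\equiv 6a_{m}+5a_{m-1}\pmod 7$ (equivalently $\equiv -a_{m}-2a_{m-1}$) from the initial values gives
\[
(E_{0},E_{1},\dots ,E_{7})\equiv(0,1,6,6,3,6,2,0),\qquad (e_{0},e_{1},\dots ,e_{7})\equiv(1,0,5,2,2,1,2,3)\pmod 7 .
\]
In particular $E_{7}\equiv0$ and $e_{7}\equiv3\pmod 7$, so \eqref{kp0} with $m=7$ becomes $B_{3,7}\!\left(4^{7}n+\tfrac{4^{7}-1}{3}\right)\equiv 3\,B_{3,7}(n)\pmod 7$, which is \eqref{w.19}. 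I also record $E_{6}\equiv e_{6}\equiv2\pmod 7$ for use below.

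For \eqref{w.18} the key elementary observation is
\[
4^{7}n+\frac{10\cdot4^{6}-1}{3}=4^{6}(4n+3)+\frac{4^{6}-1}{3},
\]
so that \eqref{kp0} with $m=6$ and $n$ replaced by $4n+3$ gives, modulo $7$,
\[
B_{3,7}\!\left(4^{7}n+\frac{10\cdot4^{6}-1}{3}\right)\equiv E_{6}\,B_{3,7}(16n+13)+e_{6}\,B_{3,7}(4n+3)\equiv 2\,B_{3,7}(16n+13)+2\,B_{3,7}(4n+3) .
\]
Next I need a relation between $B_{3,7}(16n+13)$ and $B_{3,7}(4n+3)$. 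Extracting from \eqref{w.5} and from \eqref{w.9} the terms in which the power of $q$ is odd — the even parts of these two identities already produced \eqref{w.6} and \eqref{w.10} — then dividing by $q$ and replacing $q^{2}$ by $q$, I obtain
\[
\sum_{n\ge0}B_{3,7}(4n+3)q^{n}\equiv 5\frac{f_{2}^{5}f_{3}^{2}f_{6}}{f_{1}^{2}}+4\frac{f_{1}^{7}f_{6}^{4}}{f_{2}^{4}f_{3}},\qquad \sum_{n\ge0}B_{3,7}(16n+13)q^{n}\equiv 2\frac{f_{2}^{5}f_{3}^{2}f_{6}}{f_{1}^{2}}+3\frac{f_{1}^{7}f_{6}^{4}}{f_{2}^{4}f_{3}}\pmod 7 .
\]
Since $6\cdot5\equiv2$ and $6\cdot4\equiv3\pmod 7$, the second series is $6$ times the first, whence $B_{3,7}(16n+13)\equiv 6\,B_{3,7}(4n+3)\pmod 7$. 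Substituting this into the previous congruence,
\[
B_{3,7}\!\left(4^{7}n+\frac{10\cdot4^{6}-1}{3}\right)\equiv 2\cdot6\,B_{3,7}(4n+3)+2\,B_{3,7}(4n+3)=14\,B_{3,7}(4n+3)\equiv0\pmod 7 ,
\]
which is \eqref{w.18}.

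Almost everything here is routine once Lemma~\ref{kpz} is on the table; I expect the only two places that call for attention to be, first, carrying out the $\bmod 7$ recursion for $E_{m},e_{m}$ correctly (the closed forms cannot be used), and second, spotting the decomposition $4^{7}n+\tfrac{10\cdot4^{6}-1}{3}=4^{6}(4n+3)+\tfrac{4^{6}-1}{3}$ that brings Lemma~\ref{kpz} to bear on the first progression. The extra $2$-dissection needed is immediate from the already-displayed identities \eqref{w.5} and \eqref{w.9} and is of exactly the same type as the dissections carried out en route to \eqref{w.6} and \eqref{w.10}, so it should pose no difficulty.
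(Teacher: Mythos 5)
Your proposal is correct, but it reaches both congruences by a genuinely different route from the paper. The paper stops at the $m=6$ instance of Lemma~\ref{kpz}: using $E_6\equiv e_6\equiv2\pmod 7$ together with \eqref{w.2} and \eqref{w.6}, the $f_1^5f_3$ contributions cancel ($2\cdot6+2\equiv0$) and one is left with $\sum_{n\ge0}B_{3,7}\bigl(4^6n+\tfrac{4^6-1}{3}\bigr)q^n\equiv 6f_2^9f_3^4/(f_1^4f_6^3)$, i.e.\ \eqref{w.20}; a further $2$-dissection via \eqref{kp3} shows the odd part of this series is $3qf_4^5f_{12}$, whence $\sum_{n\ge0}B_{3,7}\bigl(2\cdot4^6n+\tfrac{4^7-1}{3}\bigr)q^n\equiv3f_2^5f_6$, and since that is a series in $q^2$ its odd coefficients give \eqref{w.18} while its even part $3f_1^5f_3$ gives \eqref{w.19} via \eqref{w.2}. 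You instead obtain \eqref{w.19} purely arithmetically from $E_7\equiv0$, $e_7\equiv3\pmod 7$ (your recursion values check out, and your caution that the closed forms involve $\sqrt{14}\equiv0$ and $1/28$ and so cannot be reduced directly is well placed), and you obtain \eqref{w.18} from the $m=6$ case evaluated along the subprogression $4n+3$ combined with the proportionality $B_{3,7}(16n+13)\equiv6B_{3,7}(4n+3)\pmod 7$, which you correctly read off from the odd parts of the already-displayed dissections \eqref{w.5} and \eqref{w.9} (the coefficient pairs $(5,4)$ and $(2,3)$ do differ by the factor $6$ modulo $7$), giving $2\cdot6+2\equiv0$. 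Both arguments are sound; yours trades the paper's one extra theta-function dissection \eqref{w.20}--\eqref{w.22} for the observation $4^7n+\tfrac{10\cdot4^6-1}{3}=4^6(4n+3)+\tfrac{4^6-1}{3}$ and a reuse of intermediate identities, whereas the paper's single dissection delivers the two congruences simultaneously from one series.
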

\begin{proof}
From Lemma \ref{kpz}, put $m=6$, we find that, $E_6\equiv2\pmod{7}$ and $e_{6}\equiv2\pmod{7}$,
\begin{equation}\label{w.20}
\ds\sum_{n=0}^{\infty}B_{3,7}\left(4^{6}n+\frac{4^{6}-1}{3}\right)q^n\equiv6\frac{f_2^9 f_3^4}{f_1^4 f_{6}^3}.
\end{equation}
Substituting \eqref{kp3}  into \eqref{w.20}, we have
\begin{align}\label{w.21}\nonumber
\ds\sum_{n=0}^{\infty}B_{3,7}\left(4^{6}n+\frac{4^{6}-1}{3}\right)q^n&\equiv6\frac{f_2^9}{f_{6}^3}\left(\frac{f_6 f_{12}^2 f_4^4}{f_2^5 f_8 f_{24}}+2q\frac{f_6^2 f_8 f_{24} f_4}{f_2^4 f_{12}}\right)^2,\\
&\equiv6\frac{f_4^8 f_{12}^4}{f_2 f_6 f_8^2 f_{24}^2}+3q f_4^5 f_{12}+3q^2\frac{f_2 f_4^2 f_6 f_8^2 f_{24}^2}{f_{12}^2}.
\end{align}
If we extract those terms in which the power of $q$ is congruent to 1 modulo 2,  divide by $q$ and replace $q^2$ by $q$, we have
\begin{align}\label{w.22}
\ds\sum_{n=0}^{\infty}B_{3,7}\left(2\cdot4^{6}n+\frac{4^{7}-1}{3}\right)q^n&\equiv3f_2^5 f_{6}
\end{align}
This  completes  the  proof of equation \eqref{w.18} and \eqref{w.19} follow from \eqref{w.22}.
\end{proof}
\ni\textbf{Proof of Theorem \ref{ak}.} Equation \eqref{ak1} follow from \eqref{w.19} by mathematical induction. Empolying \eqref{w.18} in \eqref{ak1}, we obtain \eqref{ak2}.
\section{Congruence for $(9,5)$-regular bipartition}
In this section we will prove Theorem \ref{0a}, we need the result.
\begin{proposition}\normalfont For $n\geq0$, we have
\begin{align}\normalfont\label{s2}
B_{9,5}\left(5^4n+\frac{5^4-1}{2}\right)&\equiv 2B_{9,5}(n)\pmod3,\\
B_{9,5}\left(5^4n+\frac{(2k+1)\cdot5^3-1}{2}\right)&\equiv 0\pmod3, ~~\textrm{where} ~~k\in\{4,5\}\label{s3}.
\end{align}
\end{proposition}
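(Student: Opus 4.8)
The plan is to follow the template of the proof of Lemma~\ref{kpz}, but now dissecting in arithmetic progressions modulo $5$ (using the identities \eqref{0.2} and \eqref{0.3}) rather than modulo $2$. Setting $l=9$, $m=5$ in \eqref{0.1} and reducing modulo $3$ with \eqref{k1}, so that $f_9\equiv f_3^3\equiv f_1^9\pmod 3$, we get
\[
\sum_{n=0}^{\infty}B_{9,5}(n)q^n\equiv f_1^{7}f_5\pmod 3 .
\]
Since $f_1$ appears only in the numerator, the dissection \eqref{0.3a} of $1/f_1$ will not be needed here.

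First I would compute the $5$-dissection of $f_1^{7}f_5$ modulo $3$. Writing $f_1=f_{25}\bigl(\tfrac1S-q-q^2S\bigr)$ by \eqref{0.2} and using that cubing is additive modulo $3$,
\[
\bigl(\tfrac1S-q-q^2S\bigr)^{7}\equiv\bigl(\tfrac1{S^{3}}-q^{3}-q^{6}S^{3}\bigr)^{2}\bigl(\tfrac1S-q-q^2S\bigr)\pmod 3 ,
\]
whose right side is a Laurent polynomial in $S$ with $q$-exponents $0,1,\dots,14$ (three in each residue class modulo $5$). Multiplying back by $f_5f_{25}^{7}$ and re-expressing the powers $S^{\pm5}$ by means of \eqref{0.3} (note $-11\equiv 1\pmod 3$) produces the full $5$-dissection of $f_1^{7}f_5$ in terms of $f_5^{7}f_{25}$ and $f_5f_{25}^{7}$.

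Because $\tfrac{5^{j}-1}{2}=2\bigl(1+5+\cdots+5^{j-1}\bigr)$, the progression in the Proposition is reached by repeatedly extracting the terms with exponent $\equiv 2\pmod 5$, dividing by $q^2$ and replacing $q^5$ by $q$. Carrying this out I expect to obtain successively
\[
\sum_{n\ge0}B_{9,5}(5n+2)q^n\equiv 2f_1^{7}f_5+2qf_1f_5^{7},\qquad
\sum_{n\ge0}B_{9,5}(25n+12)q^n\equiv 2f_1^{7}f_5+qf_1f_5^{7},
\]
\[
\sum_{n\ge0}B_{9,5}(125n+62)q^n\equiv qf_1f_5^{7}\pmod 3 ,
\]
the last step using that the coefficient $3$ of $f_1^7 f_5$ that appears there vanishes modulo $3$. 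Now by \eqref{0.2}, $qf_1f_5^{7}=f_5^{7}f_{25}\bigl(\tfrac qS-q^2-q^3S\bigr)$ already is its own $5$-dissection: its exponents lie only in the classes $1,2,3\pmod 5$. Consequently $B_{9,5}(625n+62)\equiv B_{9,5}(625n+562)\equiv 0\pmod 3$, and since $687\equiv 62\pmod{625}$ these are \eqref{s3} for $k=5$ and $k=4$. The class $\equiv 2\pmod5$ contributes $-q^2f_5^{7}f_{25}$, hence $\sum_{n\ge0}B_{9,5}(625n+312)q^n\equiv -f_1^{7}f_5\equiv 2\sum_{n\ge0}B_{9,5}(n)q^n\pmod 3$; as $312=\tfrac{5^4-1}{2}$ this is \eqref{s2}.

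The one real obstacle is the bookkeeping: expanding $\bigl(\tfrac1S-q-q^2S\bigr)^{7}$ modulo $3$, sorting its fifteen terms by exponent modulo $5$, and then using \eqref{0.3} to clear $S$. What keeps the computation short is precisely working modulo $3$ --- the freshman's-dream collapse of the cube, and the collapses $4\equiv 1$, $-2\equiv1$, $3\equiv0\pmod3$ among the coefficients of $f_1^7f_5$ at the intermediate stages --- which is also what makes the recursion close up after exactly four iterations.
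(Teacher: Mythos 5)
Your proposal is correct and follows essentially the same route as the paper: reduce to $f_1^7f_5\pmod 3$, apply the $5$-dissection \eqref{0.2} together with \eqref{0.3} three times along the residue class $2\pmod 5$ to reach $\sum B_{9,5}(125n+62)q^n\equiv qf_1f_5^7$, and read off both congruences from one final application of \eqref{0.2}; your intermediate generating functions agree with the paper's \eqref{3.6}, \eqref{3.10} and \eqref{3.14}. The only (harmless) difference is organizational: you expand $\bigl(\tfrac1S-q-q^2S\bigr)^7$ via the cube collapse modulo $3$, whereas the paper writes out the fifteen-term expansion explicitly.
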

\begin{proof}
Setting $l=9, m=5$ in \eqref{0.1}, we have
\begin{equation}\label{3.1}
\ds\sum_{n=0}^{\infty}B_{9,5}(n)q^n=\frac{f_9f_5}{f_1^2}.
\end{equation}
From \eqref{k1} and \eqref{3.1}, we have
\begin{equation}\label{3.2}
\ds\sum_{n=0}^{\infty}B_{9,5}(n)q^n\equiv f^7_1f_5 \pmod3.
\end{equation}
Substituting \eqref{0.2} into \eqref{3.2}, we have
\begin{align}\label{3.3}\nonumber
\ds\sum_{n=0}^{\infty}B_{9,5}(n)q^n&\equiv f_5f^7_{25}\left(\ds\frac{1}{S}-q-q^2S\right)^7,\\\nonumber
&\equiv f_5f^7_{25}\left(\frac{1}{S^7}+\frac{2 q}{S^6}+\frac{2 q^2}{S^5}+\frac{q^3}{S^4}+\frac{2 q^4}{S^3}+\frac{2 q^5}{S^2}+\frac{2 q^6}{S}+q^7+S q^8+2S^2q^9
\right.\\
&\qquad\left.\vphantom{\ds\frac{8q^3}{S}}+ S^3q^{10}+S^4q^{11} +S^5q^{12}+2 S^6q^{13}+2 S^7q^{14}\right).
\end{align}
If we extract those terms in which the power of $q$ is congruent to 2 modulo 5,  divide by $q^2$, we have
\begin{equation}\label{3.4}
\ds\sum_{n=0}^{\infty}B_{9,5}(5n+2)q^{5n}\equiv f_5f^7_{25}\left(2\left(\frac{1}{S^5}-11 q^5-q^{10}S^5\right)+2 q^5\right).
\end{equation}
Substituting \eqref{0.3} into \eqref{3.4}, we have
\begin{align}\label{3.5}
\ds\sum_{n=0}^{\infty}B_{9,5}(5n+2)q^{5n}&\equiv f_5f^7_{25}\left(2\frac{f^6_5}{f^6_{25}}+2 q^5\right)\equiv 2f^7_5f_{25}+2f_5f^7_{25}q^5.
\end{align}
The above equation $q^5$ replace by $q$, we have
\begin{equation}\label{3.6}
\ds\sum_{n=0}^{\infty}B_{9,5}(5n+2)q^{n}\equiv 2f^7_1f_{5}+2f_1f^7_{5}q.
\end{equation}
Substituting \eqref{0.2} into \eqref{3.6}, we have
\begin{align}\label{3.7}\nonumber
\ds\sum_{n=0}^{\infty}B_{9,5}(5n+2)q^n&\equiv 2f_5f^7_{25}\left(\frac{1}{S^7}+\frac{2 q}{S^6}+\frac{2 q^2}{S^5}+\frac{q^3}{S^4}+\frac{2 q^4}{S^3}+\frac{2 q^5}{S^2}+\frac{2 q^6}{S}+q^7+S q^8+2S^2q^9+ S^3q^{10}
\right.\\
&\qquad\left.\vphantom{\ds\frac{8q^3}{S}}
+S^4q^{11} +S^5q^{12}+2 S^6q^{13}+2 S^7q^{14}\right)+2f^7_{5}f_{25}\left(\ds\frac{1}{S}-q-q^2S\right)q.
\end{align}
If we extract those terms in which the power of $q$ is congruent to 2 modulo 5,  divide by $q^2$, we have
\begin{equation}\label{3.8}
\ds\sum_{n=0}^{\infty}B_{9,5}(25n+12)q^{5n}\equiv 2f_5f^7_{25}\left(2\left(\frac{1}{S^5}-11 q^5-q^{10}S^5\right)+2 q^5\right)+f^7_{5}f_{25}.
\end{equation}
Substituting \eqref{0.3} into \eqref{3.8}, we have
\begin{align}\label{3.9}\nonumber
\ds\sum_{n=0}^{\infty}B_{9,5}(25n+12)q^{5n}&\equiv 2f_5f^7_{25}\left(2\frac{f^6_5}{f^6_{25}}+2 q^5\right)+f^7_{5}f_{25}\\
&\equiv 2f^7_5f_{25}+f_5f^7_{25}q^5.
\end{align}
The above equation $q^5$ replace by $q$, we have
\begin{equation}\label{3.10}
\ds\sum_{n=0}^{\infty}B_{9,5}(25n+12)q^{n}\equiv 2f^7_1f_{5}+f_1f^7_{5}q.
\end{equation}
Substituting \eqref{0.2} into \eqref{3.10}, we have
\begin{align}\label{3.11}\nonumber
\ds\sum_{n=0}^{\infty}B_{9,5}(25n+12)q^n&\equiv 2f_5f^7_{25}\left(\frac{1}{S^7}+\frac{2 q}{S^6}+\frac{2 q^2}{S^5}+\frac{q^3}{S^4}+\frac{2 q^4}{S^3}+\frac{2 q^5}{S^2}+\frac{2 q^6}{S}+q^7+S q^8+2S^2q^9
\right.\\\nonumber
&\qquad\left.\vphantom{\ds\frac{8q^3}{S}}
+ S^3q^{10}+S^4q^{11} +S^5q^{12}+2 S^6q^{13}+2 S^7q^{14}\right)+f^7_{5}f_{25}\left(\ds\frac{1}{S}-q-q^2S\right)q.
\end{align}
If we extract those terms in which the power of $q$ is congruent to 2 modulo 5,  divide by $q^2$, we have
\begin{equation}\label{3.12}
\ds\sum_{n=0}^{\infty}B_{9,5}(125n+62)q^{5n}\equiv 2f_5f^7_{25}\left(2\left(\frac{1}{S^5}-11 q^5-q^{10}S^5\right)+2 q^5\right)+2f^7_{5}f_{25}.
\end{equation}
Substituting \eqref{0.3} into \eqref{3.12}, we have
\begin{align}\label{3.13}\nonumber
\ds\sum_{n=0}^{\infty}B_{9,5}(125n+62)q^{5n}&\equiv 2f_5f^7_{25}\left(2\frac{f^6_5}{f^6_{25}}+2 q^5\right)+2f^7_{5}f_{25},\\
&\equiv f_5f^7_{25}q^5.
\end{align}
The above equation $q^5$ replace by $q$, we have
\begin{equation}\label{3.14}
\ds\sum_{n=0}^{\infty}B_{9,5}(125n+62)q^{n}\equiv f_1f^7_{5}q.
\end{equation}
Substituting \eqref{0.2} into \eqref{3.14}, we have
\begin{equation}\label{3.15}
\ds\sum_{n=0}^{\infty}B_{9,5}(125n+62)q^{n}\equiv f^7_{5}f_{25}\left(\ds\frac{1}{S}-q-q^2S\right)q.
\end{equation}
The equation \eqref{s2} and \eqref{s3} are follow from \eqref{3.15}.
\end{proof}
\ni\textbf{Proof of Theorem \ref{0a}}
From \eqref{s2} using mathematical induction we have \eqref{0a1} and employing \eqref{s3} in \eqref{0a1}, we arrive at the congruence \eqref{0a2}.
\section{Congruence for $(5,11)$-regular bipartition}
Now we will prove Theorem \ref{1a}, we have some results 
\begin{lemma}\label{1}\normalfont For all $n\geq0$ and $m\geq0$,
\begin{equation}\label{1.1a}
B_{5,11}\left(5^{2m}n+\frac{7\cdot5^{2m}-7}{12}\right)\equiv A_mB_{5,11}(5^2n+14)+a_mB_{5,11}(n)\pmod{11},
\end{equation}
where
\begin{equation}\nonumber
\ds A_m=\frac{\sqrt{29}}{29}\left(\frac{1+\sqrt{29}}{2}\right)^{m}-\frac{\sqrt{29}}{29}\left(\frac{1-\sqrt{29}}{2}\right)^{m}
\end{equation}
and
\begin{equation}\nonumber
\ds a_m=\left(\frac{1}{2}-\frac{\sqrt{29}}{58}\right)\left(\frac{1+\sqrt{29}}{2}\right)^{m}
      +\left(\frac{1}{2}+\frac{\sqrt{29}}{58}\right)\left(\frac{1-\sqrt{28}}{2}\right)^{m}.
\end{equation}
\end{lemma}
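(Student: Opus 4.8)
The plan is to follow the template of Lemma \ref{kpz}. Putting $l=5$, $m=11$ in \eqref{0.1} and reducing by \eqref{k1} gives
\[
\sum_{n=0}^{\infty}B_{5,11}(n)q^n=\frac{f_5 f_{11}}{f_1^2}\equiv f_1^9 f_5\pmod{11}.
\]
I would then perform iterated $5$-dissections using the Rogers--Ramanujan parametrization \eqref{0.2}--\eqref{0.3a}: writing $f_1^9 f_5=f_5 f_{25}^9\bigl(\tfrac1S-q-q^2S\bigr)^9$ and expanding the ninth power, each monomial $q^a S^b$ obeys $a-b=9$, so extracting a single residue class of the exponent of $q$ modulo $5$, replacing $q^5$ by $q$, and then using \eqref{0.3} (together with \eqref{0.3a} whenever a factor $1/f_5$ reappears) to collapse the surviving powers of $S$, returns a $\mathbb{Z}/11\mathbb{Z}$-linear combination of a short list of eta-quotients. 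Since $\tfrac{7\cdot5^{2}-7}{12}=14=5\cdot2+4$ and, more generally, $\tfrac{7\cdot5^{2k}-7}{12}\equiv14\pmod{25}$ for $k\ge1$, one is led to extract the class $4\pmod5$ and then the class $2\pmod5$; call the composite operation (extract $4\pmod5$, $q^5\mapsto q$, extract $2\pmod5$, $q^5\mapsto q$) a \emph{$5^2$-step}. Under this step, $\sum_n B_{5,11}(n)q^n\mapsto\sum_n B_{5,11}(5^2n+14)q^n\mapsto\sum_n B_{5,11}(5^4n+364)q^n$.

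First I would carry out the $5^2$-step twice, explicitly. The outcome I expect is that there is an eta-quotient $h(q)$ (the analogue of the function $f_2^9f_3^4/(f_1^4f_6^3)$ appearing in Section $3$) and constants $A,B\in\mathbb{Z}/11\mathbb{Z}$ with $B\not\equiv0\pmod{11}$ such that
\[
\sum_{n=0}^{\infty}B_{5,11}(5^2n+14)q^n\equiv A f_1^9 f_5+B\,h(q)\pmod{11}
\]
and
\[
\sum_{n=0}^{\infty}B_{5,11}(5^4n+364)q^n\equiv (A+7) f_1^9 f_5+B\,h(q)\pmod{11},
\]
with the \emph{same} coefficient $B$ of $h$ in both lines. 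Granting this, elimination of $h$ (using $f_1^9 f_5=\sum_n B_{5,11}(n)q^n$) is immediate: $B\,h\equiv\sum_n B_{5,11}(5^2n+14)q^n-A\sum_n B_{5,11}(n)q^n$, whence
\[
B_{5,11}\!\left(5^4n+\tfrac{7\cdot5^4-7}{12}\right)\equiv B_{5,11}\!\left(5^2n+14\right)+7\,B_{5,11}(n)\pmod{11}.
\]
This is exactly \eqref{1.1a} for $m=2$ (since $A_2=1$ and $a_2=7$); the cases $m=0$ and $m=1$ are tautologies, because $A_0=0,\ a_0=1$ and $A_1=1,\ a_1=0$.

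Finally, the general case follows by the same two-term induction as in \eqref{w.14}--\eqref{w.17}. Substituting $n\mapsto 5^{2(m-1)}n+\tfrac{7\cdot5^{2(m-1)}-7}{12}$ into the displayed $m=2$ congruence and using the elementary identities
\[
5^4\cdot\tfrac{7\cdot5^{2(m-1)}-7}{12}+\tfrac{7\cdot5^4-7}{12}=\tfrac{7\cdot5^{2(m+1)}-7}{12},\qquad 5^2\cdot\tfrac{7\cdot5^{2(m-1)}-7}{12}+14=\tfrac{7\cdot5^{2m}-7}{12},
\]
one gets
\[
B_{5,11}\!\left(5^{2(m+1)}n+\tfrac{7\cdot5^{2(m+1)}-7}{12}\right)\equiv B_{5,11}\!\left(5^{2m}n+\tfrac{7\cdot5^{2m}-7}{12}\right)+7\,B_{5,11}\!\left(5^{2(m-1)}n+\tfrac{7\cdot5^{2(m-1)}-7}{12}\right)\pmod{11}.
\]
Combining this with the inductive hypothesis \eqref{1.1a} at $m$ and $m-1$, and with the recurrences $A_{m+1}=A_m+7A_{m-1}$, $a_{m+1}=a_m+7a_{m-1}$ — which are immediate from the closed forms, the common characteristic equation being $x^2-x-7=0$ with roots $\tfrac{1\pm\sqrt{29}}{2}$ — closes the induction. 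The main obstacle is the middle step: pushing $f_1^9 f_5$ through two $5^2$-steps while keeping the bookkeeping under control, and in particular confirming that after \emph{one} $5^2$-step the result already lies in the two-dimensional span of $f_1^9 f_5$ and $h$, and that the coefficient of $h$ is reproduced unchanged by the second step. It is precisely this stability — equivalently, that the $5^2$-step acts on that plane with trace $1$ and determinant $-7\equiv4\pmod{11}$ — that pins down the stated $A_m$ and $a_m$.
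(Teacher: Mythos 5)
Your proposal follows essentially the same route as the paper: iterated $5$-dissections of $f_1^9f_5$ via the Rogers--Ramanujan parametrization \eqref{0.2}--\eqref{0.3a} along the residue classes $4$ then $2$ modulo $5$, leading to the key relation $B_{5,11}(5^4n+364)\equiv B_{5,11}(5^2n+14)+7B_{5,11}(n)\pmod{11}$, followed by the two-term induction driven by $A_{m+1}=A_m+7A_{m-1}$ and $a_{m+1}=a_m+7a_{m-1}$. The structural prediction you flag as the main obstacle is exactly what the paper verifies by explicit computation: after one $5^2$-step one gets $3f_1^9f_5+3qf_1^3f_5^7$ and after two one gets $10f_1^9f_5+3qf_1^3f_5^7$, i.e.\ your $h(q)=qf_1^3f_5^7$ with $A=3$, $B=3$, and coefficient of $h$ preserved.
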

\begin{proof}
Setting $l=5$ and $m=11$ in \eqref{0.1}, we have
\begin{equation}\label{1.2}
\sum_{n=0}^{\infty}B_{5,11}(n)q^n\equiv f_5f^9_1 \pmod{11}.
\end{equation}
Substituting \eqref{0.2} into \eqref{1.2}, we have
\begin{align}\label{1.3}\nonumber
 \ds\sum_{n=0}^{\infty}B_{5,11}(n)q^n&\equiv f_5f^9_{25}\left(2 q^9+9 q^4 \left(\frac{1}{S^5}-11 q^5-q^{10}S^5\right)+10 q^{18} S^9+2 q^{17} S^8+6 q^{16}S^7
 +10 q^{15} S^6
 \right.\\ \nonumber
&\qquad\left.\vphantom{\ds\frac{8q^3}{S}}
 +5 q^{13} S^4+6 q^{12} S^3+9 q^{11} S^2+7 q^{10} S+\frac{4 q^8}{S}+\frac{9 q^7}{S^2}+\frac{5 q^6}{S^3}+\frac{5 q^5}{S^4}+\frac{10 q^3}{S^6}
 \right.\\ \nonumber
&\qquad\left.\vphantom{\ds\frac{8q^3}{S}}
 +\frac{5 q^2}{S^7}+\frac{2 q}{S^8}+\frac{1}{S^9}\right),\\\nonumber
 &\equiv f_5f^9_{25}\left(2 q^9+9 q^4 \left(\frac{f^6_5}{f^6_{25}}\right)+10 q^{18} S^9+2 q^{17} S^8+6 q^{16}S^7 +10 q^{15} S^6+5 q^{13} S^4+6 q^{12} S^3
 \right.\\ \nonumber
&\qquad\left.\vphantom{\ds\frac{8q^3}{S}}
 +9 q^{11} S^2+7 q^{10} S+\frac{4 q^8}{S}+\frac{9 q^7}{S^2}+\frac{5 q^6}{S^3}+\frac{5 q^5}{S^4}+\frac{10 q^3}{S^6}+\frac{5 q^2}{S^7}+\frac{2 q}{S^8}+\frac{1}{S^9}\right).
 \end{align}
If we extract those terms in which the power of $q$ is congruent to 4 modulo 5, divide by $q^4$ and replace $q^5$ by $q$, we have
\begin{equation}\label{1.4}
\ds\sum_{n=0}^{\infty}B_{5,11}(5n+4)q^n\equiv9f^7_1f^3_5+2f_1f^9_5q.
\end{equation}
Substituting \eqref{0.2} into \eqref{1.4}, we have
\begin{equation}\label{1.5}
\ds\sum_{n=0}^{\infty}B_{5,11}(5n+4)q^n\equiv9f^3_5f^7_{25}\left(\frac{1}{S}-q-q^2S\right)^7+2f^9_5f_{25}\left(\frac{1}{S}-q-q^2S\right)q.
\end{equation}
If we extract those terms in which the power of $q$ is congruent to 2 modulo 5, divide by $q^2$, we have
\begin{eqnarray}\label{1.6}\nonumber
\ds\sum_{n=0}^{\infty}B_{5,11}(5^2n+14)q^{5n}&\equiv&9f^3_5f^7_{25}\left(4q^5+3\left(\frac{1}{S^5}-11 q^5-q^{10}S^5\right)\right)+9f^9_5f_{25},\\
&\equiv&9f^3_5f^7_{25}\left(4q^5+3\left(\frac{f^6_5}{f^6_{25}}\right)\right)+9f^9_5f_{25}.
\end{eqnarray}
The above equation $q^5$ replace by $q$, we have
\begin{equation}\label{1.8}
\ds\sum_{n=0}^{\infty}B_{5,11}(5^2n+14)q^{n}\equiv3f_5f^9_1+3f^7_5f^3_1q.
\end{equation}
Substituting \eqref{0.2} in \eqref{1.8}, we have
\begin{equation}\label{1.13}
\ds\sum^{\infty}_{n=0}B_{5,11}\left(5^2n+14\right)q^n\equiv3f_5f^9_{25}\left(\frac{1}{S}-q-q^2S\right)^9+3f^7_5f^3_{25}\left(\frac{1}{S}-q-q^2S\right)^3q.
\end{equation}
If we extract those terms in which the power of $q$ is congruent to 4 modulo 5, divide by $q^4$, we have
\begin{align}\label{1.14}\nonumber
\ds\sum^{\infty}_{n=0}B_{5,11}\left(5^3n+114\right)q^{5n}&\equiv 3f_5f^9_{25}\left(2q^5+9\left(\frac{1}{S^5}-11q^5-q^{10}S^5\right)\right)+4f^7_5f^3_{25},\\
&\equiv 3f_5f^9_{25}\left(2q^5+9\left(\frac{f^6_5}{f^6_{25}}\right)\right)+4f^7_5f^3_{25}.
\end{align}
The above equation $q^5$ replace by $q$, we have
\begin{equation}\label{1.16}
\ds\sum^{\infty}_{n=0}B_{5,11}\left(5^3n+114\right)q^n\equiv9f^7_1f^3_5+6f_1f^9_5q.
\end{equation}
Substituting \eqref{0.2} in \eqref{1.16} and extract those terms in which the power of $q$ is congruent to 2 modulo 5, divide by $q^2$ and replace $q^5$ by $q$, we have
\begin{equation}\label{1.17}
\sum^\infty_{n=0}B_{5,11}\left(5^4n+364\right)q^n\equiv 10f^9_1f_5+3f^3_1f^7_5q.
\end{equation}
Now compare the equation \eqref{1.17}, \eqref{1.2} and \eqref{1.8}, we have
\begin{equation}\label{1.x}
B_{5,11}(5^4n+364)\equiv B_{5,11}(5^2n+14)+7B_{5,11}(n).
\end{equation}
Which is true if $m=1$. Now, we prove by induction on $k$ in \eqref{1.1a}. Assume that Lemma \ref{1} is true for $k=m-1$ and $k=m$,
\begin{equation}\label{1.17a}
B_{5,11}\left(5^{2(m-1)}n+\frac{7\cdot5^{2(m-1)}-7}{12}\right)\equiv A_{m-1}B_{5,11}(5^2n+14)+a_{m-1}B_{5,11}(n),
\end{equation}
and
\begin{equation}\label{1.17b}
B_{5,11}\left(5^{2m}n+\frac{7\cdot5^{2m}-7}{12}\right)\equiv A_mB_{5,11}(5^2n+14)+a_mB_{5,11}(n).
\end{equation}
Now, we will show that $k=m+1$ is true. Replacing $n=\ds5^{2m-2}+\frac{7\cdot5^{2m-2}-7}{12}$ in \eqref{1.x}, we have
\begin{align}\label{1.17c}\nonumber
B_{5,11}\left(5^{2m+2}n+\frac{7\cdot5^{2m+2}-7}{12}\right)&\equiv B_{5,11}\left(5^{2m}n+\frac{7\cdot5^{2m}-7}{12}\right)\\
&\qquad+7B_{5,11}\left(5^{2m-2}n+\frac{7\cdot5^{2m-2}-7}{12}\right).
\end{align}
we can easily check that
\begin{equation}\label{1.17d}
A_{m+1}=A_m+7A_{m-1}
\end{equation}
and
\begin{equation}\label{1.17e}
a_{m+1}=a_m+7a_{m-1}
\end{equation}
From equation \eqref{1.17a}, \eqref{1.17b}, \eqref{1.17c}, \eqref{1.17d} and \eqref{1.17e}, we have
\begin{align}\label{1.17f}\nonumber
B_{5,11}\left(5^{2m+2}n+\frac{7\cdot5^{2m+2}-7}{12}\right)&\equiv A_mB_{5,11}(5^2n+14)+a_mB_{5,11}(n)\\\nonumber
&\qquad+7(A_{m-1}B_{5,11}(5^2n+14)+a_{m-1}B_{5,11}(n)),\\\nonumber
&\equiv(A_m+7A_{m-1})B_{5,11}(5^2n+14)+(a_m+7a_{m-1})B_{5,11}(n),\\
&\equiv A_{m+1}B_{5,11}(5^2n+14)+a_{m+1}B_{5,11}(n).
\end{align}
Therefore the Lemma \ref{1} is true by induction of \eqref{1.1a}.
\end{proof}
\begin{proposition}\label{2}\normalfont For all $n\geq0$,
\begin{eqnarray}
\label{1.18a}
B_{5,11}\left(5^{11}(5n+k)+\frac{11\cdot5^{11}-7}{12}\right)&\equiv&0\pmod{11},~~\textrm{where}~~ k=4,5.\\
\label{1.18b}
B_{5,11}\left(5^{12}n+\frac{7\cdot5^{12}-7}{12}\right)&\equiv&2B_{5,11}(n)\pmod{11}.
\end{eqnarray}
\end{proposition}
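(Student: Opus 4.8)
\noindent The plan is to continue the $5$-dissection chain that was used to prove Lemma~\ref{1}, in exactly the same way as in the proof of Proposition~\ref{kpz1}: apply Lemma~\ref{1} at a suitable index and then peel off two more $5$-dissections.

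First I would record the residues of $A_m$ and $a_m$ modulo $11$. Both sequences satisfy the recurrence $x_{m+1}=x_m+7x_{m-1}$ (see \eqref{1.17d} and \eqref{1.17e}) with initial values $A_0=0,\ A_1=1$ and $a_0=1,\ a_1=0$ (which one reads off from the closed forms, or equivalently from \eqref{1.2}, \eqref{1.8} and \eqref{1.x}), so a short iteration gives
\[
A_5\equiv5,\qquad a_5\equiv6,\qquad A_6\equiv0,\qquad a_6\equiv2\pmod{11}.
\]
Substituting $A_5,a_5$ into \eqref{1.1a} and using the generating functions \eqref{1.2} and \eqref{1.8}, one obtains
\[
\sum_{n=0}^{\infty}B_{5,11}\!\left(5^{10}n+\frac{7\cdot5^{10}-7}{12}\right)q^{n}\equiv 10\,f_5f_1^{9}+4\,f_5^{7}f_1^{3}q\pmod{11}.
\]

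Next I would substitute \eqref{0.2} for every $f_1$ on the right, extract the terms in which the power of $q$ is congruent to $4$ modulo $5$, divide by $q^4$ and replace $q^5$ by $q$. For the summand $10\,f_5f_1^{9}$ this is (up to the scalar) the dissection already performed between \eqref{1.2} and \eqref{1.4}; for the summand $4\,f_5^{7}f_1^{3}q$ one expands $\left(\tfrac{1}{S}-q-q^2S\right)^3$ and notes that, after absorbing $f_5^{7}$, $f_{25}^{3}$ and the powers of $S$ (all of which are series in $q^5$), only the monomial built from the $q^3$-term of that cube survives the extraction. Reducing modulo $11$, the two contributions to $f_1^{7}f_5^{3}$ have coefficients summing to $11\equiv0$, so
\[
\sum_{n=0}^{\infty}B_{5,11}\!\left(5^{11}n+\frac{11\cdot5^{11}-7}{12}\right)q^{n}\equiv 9\,f_1f_5^{9}q\pmod{11}.
\]
Since \eqref{0.2} shows that $f_1$ is supported on powers of $q$ congruent to $0,1,2$ modulo $5$, the series $f_1f_5^{9}q$ is supported on powers congruent to $1,2,3$ modulo $5$; hence the coefficients of $q^{5n+4}$ and of $q^{5n}$ both vanish modulo $11$, which is precisely \eqref{1.18a} for $k=4$ and for $k=5$, respectively. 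Finally, applying \eqref{0.2} once more to $9\,f_1f_5^{9}q$, extracting the terms with power of $q$ congruent to $2$ modulo $5$, dividing by $q^2$ and replacing $q^5$ by $q$, only the $-q$ branch of $\tfrac{1}{S}-q-q^2S$ contributes, giving $\sum_{n=0}^{\infty}B_{5,11}\!\left(5^{12}n+\frac{7\cdot5^{12}-7}{12}\right)q^{n}\equiv2\,f_1^{9}f_5\pmod{11}$, and comparison with \eqref{1.2} gives \eqref{1.18b}. (Alternatively, \eqref{1.18b} follows at once from Lemma~\ref{1} with $m=6$, since $A_6\equiv0$ and $a_6\equiv2\pmod{11}$.)

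The part I expect to be the main obstacle is the middle $5$-dissection: expanding $\left(\tfrac{1}{S}-q-q^2S\right)^3$, correctly deciding which monomials land in the residue class $4$ modulo $5$ once the $q^5$-series $f_5^{7}$, $f_{25}^{3}$ and $S^{\pm k}$ are taken into account, and then carrying out the reduction modulo $11$ so that the spurious $f_1^{7}f_5^{3}$ terms cancel. Once the generating function $9\,f_1f_5^{9}q$ has been reached, the congruence \eqref{1.18a} is an immediate consequence of the $5$-adic support of $f_1$ given by \eqref{0.2}.
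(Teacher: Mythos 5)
Your proposal is correct and follows essentially the same route as the paper: apply Lemma \ref{1} with $m=5$ to get $10f_1^9f_5+4f_1^3f_5^7q$, perform one more $5$-dissection via \eqref{0.2} and \eqref{0.3} (with the two $f_5^7f_{25}^3$ contributions cancelling as $2+9\equiv0\pmod{11}$) to reach $9f_1f_5^9q$, and then read off both congruences from the $5$-adic support of $f_1$. Your closing remark that \eqref{1.18b} also follows directly from Lemma \ref{1} at $m=6$ with $A_6\equiv0$, $a_6\equiv2\pmod{11}$ is a valid shortcut the paper does not mention.
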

\begin{proof}
From Lemma \ref{1}, put $m=5$, we find that, $A_5\equiv5\pmod{11}$ and $a_{5}\equiv6\pmod{11}$,
\begin{equation}\label{1.19b}
\ds\sum_{n=0}^{\infty}B_{5,11}\left(5^{10}n+\frac{7\cdot5^{10}-7}{12}\right)q^n\equiv 10f^9_1f_5+4f^3_1f^7_5q.
\end{equation}
Substituting \eqref{0.2} into \eqref{1.19b}, we have
\begin{equation}\label{1.19c}\nonumber
\ds\sum_{n=0}^{\infty}B_{5,11}\left(5^{10}n+\frac{7\cdot5^{10}-7}{12}\right)q^n\equiv10f_5f^9_{25}\left(\frac{1}{S}-q-q^2S\right)^9
+4f^7_5f^3_{25}\left(\frac{1}{S}-q-q^2S\right)^3q.
\end{equation}
If we extract those terms in which the power of $q$ is congruent to 4 modulo 5, divide by $q^4$, we have
\begin{align}\label{1.19d}\nonumber
\ds\sum_{n=0}^{\infty}B_{5,11}\left(5^{11}n+\frac{11\cdot5^{11}-7}{12}\right)q^{5n}
&\equiv10f_5f^9_{25}\left(2q^5+9\left(\frac{1}{S^5}-11 q^5-q^{10}S^5\right)\right)
+9f^7_5f^3_{25},\\
&\equiv10f_5f^9_{25}\left(2q^5+9\left(\frac{f^6_5}{f^6_{25}}\right)\right)
+9f^7_5f^3_{25}.
\end{align}
The above equation $q^5$ replace by $q$, we have
\begin{equation}\label{1.19e}
\ds\sum_{n=0}^{\infty}B_{5,11}\left(5^{11}n+\frac{11\cdot5^{11}-7}{12}\right)q^n\equiv9f_1f^9_5q.
\end{equation}
Substituting \eqref{0.2} into \eqref{1.19e}, we have
\begin{equation}\label{1.19f}
\ds\sum^{\infty}_{n=0}B_{5,11}\left(5^{11}n+\frac{11\cdot5^{11}-7}{12}\right)q^n\equiv9f^9_5f_{25}\left(\frac{1}{S}-q-q^2S\right)q.
\end{equation}
This  completes  the  proof of equation \eqref{1.18a} and \eqref{1.18b} follow from \eqref{1.19f}.
\end{proof}
\ni\textbf{Proof of Theorem \ref{1a}.} Equation \eqref{12} follow from \eqref{1.18b} by mathematical induction. Empolying \eqref{1.18a} in \eqref{12}, we obtain \eqref{13}.
\section{Congruence for $(5,13)$-regular bipartition}
In this section we prove Theorem \ref{1b}, we shell develop some basic results.
\begin{lemma}\normalfont\label{3} For all $n\geq0$ and $m\geq0$,
\begin{equation}\label{2.1}
B_{5,13}\left(5^{2m}n+\frac{2\cdot5^{2m}-2}{3}\right)\equiv C_mB_{5,13}(5^2n+16)+c_mB_{5,13}(n)\pmod{13},
\end{equation}
where
\begin{equation}
\ds C_m=\frac{\sqrt{17}}{34}\left(4+\sqrt{17}\right)^{m}-\frac{\sqrt{17}}{34}\left(4-\sqrt{17}\right)^{m}
\end{equation}
and
\begin{equation}
\ds c_m=\left(\frac{1}{2}-\frac{2\sqrt{17}}{17}\right)\left(4+\sqrt{17}\right)^{m}
      +\left(\frac{1}{2}+\frac{2\sqrt{17}}{17}\right)\left(4-\sqrt{17}\right)^{m}
\end{equation}
\end{lemma}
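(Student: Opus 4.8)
The plan is to follow verbatim the strategy used for $B_{3,7}$ in Lemma~\ref{kpz} and for $B_{5,11}$ in Lemma~\ref{1}. In outline: first produce, by an explicit iterated $5$-dissection, a two-term linear recurrence among the quantities $B_{5,13}\!\left(5^{2m}n+\frac{2\cdot5^{2m}-2}{3}\right)$; then solve that recurrence to recover the stated closed forms $C_m$ and $c_m$.

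For the dissection step I would start from \eqref{0.1} with $l=5$, $m=13$, namely $\sum_{n\ge0}B_{5,13}(n)q^n=\frac{f_5f_{13}}{f_1^2}$, and reduce modulo $13$ using \eqref{k1} to get $\sum_{n\ge0}B_{5,13}(n)q^n\equiv f_1^{11}f_5\pmod{13}$. I would then substitute the Rogers--Ramanujan dissection \eqref{0.2} for $f_1$, expand $\left(\frac1S-q-q^2S\right)^{11}$ modulo $13$, and use \eqref{0.3} to fold the $\frac1{S^5}$-homogeneous piece into $\frac{f_5^6}{f_{25}^6}$, exactly as in the passage \eqref{1.3}--\eqref{1.4}. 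Extracting, on successive passes, the residue classes $1,3,1,3$ modulo $5$ (these are forced by the base-$5$ expansions $16=\overline{31}_5$ and $\frac{2\cdot5^4-2}{3}=416=\overline{3131}_5$) and replacing $q^5$ by $q$ after each pass, four dissections will bring us to a congruence of the form
\[
B_{5,13}\!\left(5^4n+\tfrac{2\cdot5^4-2}{3}\right)\equiv \alpha\,B_{5,13}(5^2n+16)+\beta\,B_{5,13}(n)\pmod{13},
\]
and I expect the bookkeeping to yield $\alpha\equiv 8$ and $\beta\equiv1\pmod{13}$. (Consistency check: modulo $13$ one has $17\equiv4$, so $4\pm\sqrt{17}\equiv 6,2$, which are exactly the roots of $x^2-8x-1\equiv0$; these are the numbers $4\pm\sqrt{17}$ appearing in $C_m,c_m$.)

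Granted the displayed base relation, the lemma follows by induction on $m$ precisely as in \eqref{1.17a}--\eqref{1.17f}. The cases $m=0$ and $m=1$ are immediate, since $C_0=c_1=0$ and $C_1=c_0=1$. For the inductive step, substitute $n\mapsto 5^{2m-2}n+\frac{2\cdot5^{2m-2}-2}{3}$ in the base relation; a short computation with $\frac{2\cdot5^{j}-2}{3}$ shows this turns it into
\[
B_{5,13}\!\left(5^{2m+2}n+\tfrac{2\cdot5^{2m+2}-2}{3}\right)\equiv 8\,B_{5,13}\!\left(5^{2m}n+\tfrac{2\cdot5^{2m}-2}{3}\right)+B_{5,13}\!\left(5^{2m-2}n+\tfrac{2\cdot5^{2m-2}-2}{3}\right)\pmod{13}.
\]
Since $4\pm\sqrt{17}$ are the roots of $x^2-8x-1=0$, both $C_m$ and $c_m$ satisfy $u_{m+1}=8u_m+u_{m-1}$, and their initial values match $C_0=0,C_1=1,c_0=1,c_1=0$; feeding these into the displayed recurrence together with the inductive hypotheses for $m-1$ and $m$ gives the formula for $m+1$.

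The main obstacle is the four-fold $5$-dissection in the middle: one has to expand and reduce modulo $13$ a fairly large pile of eta-quotients in $q$, $q^5$, $q^{25}$, keep track of which monomials $q^j$ survive the successive residue extractions so that the internal constants come out to exactly $16$ and $416$, and confirm that the surviving coefficients reduce to $8$ and $1$ rather than to something else mod $13$. The remaining ingredients --- the two trivial base cases and the elementary verification that $C_m,c_m$ solve the recurrence --- are routine.
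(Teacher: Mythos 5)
Your proposal follows essentially the same route as the paper: reduce to $f_1^{11}f_5 \pmod{13}$, perform four successive $5$-dissections via \eqref{0.2}, \eqref{0.3} (and, for the $\frac{f_{13}}{f_5}$-type terms that arise from the squared factor $\bigl(\frac{f_5^6}{f_{25}^6}\bigr)^2$, also \eqref{0.3a}) extracting residues $1,3,1,3$ modulo $5$, arrive at the base congruence $B_{5,13}(5^4n+416)\equiv 8B_{5,13}(5^2n+16)+B_{5,13}(n)\pmod{13}$, and then induct using the recurrence $u_{m+1}=8u_m+u_{m-1}$ satisfied by $C_m$ and $c_m$. The predicted constants $8$ and $1$, the extraction pattern, and the induction scheme all match the paper's proof, so the proposal is correct.
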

\begin{proof}
Setting $l=5$ and $m=13$ in \eqref{0.1}, we have
\begin{equation}\label{2.2}
\sum_{n=0}^{\infty}B_{5,13}(n)q^n\equiv f_5f^{11}_1 \pmod{13}.
\end{equation}
Substituting \eqref{0.2} in \eqref{2.2}, we have
\begin{align}\label{2.3}\nonumber
 \ds\sum_{n=0}^{\infty}B_{5,13}(n)q^n&\equiv f_5f^{11}_{25}\left(\vphantom{\ds\frac{8q^3}{S}}12 q^{22} S^{11}+2 q^{21} S^{10}+8 q^{20} S^9+10 q^{19} S^8+6 q^{18} S^7+12 q^{17} S^6
 \right.\\ \nonumber
&\qquad\left.\vphantom{\ds\frac{8q^3}{S}}
 +7 q^{16} S^5+12 q^{14} S^3+4 q^{13} S^2+10 q^{12} S+8 q^{11}+\frac{3 q^{10}}{S}+\frac{4 q^9}{S^2}+\frac{q^8}{S^3}
 \right.\\ \nonumber
&\qquad\left.\vphantom{\ds\frac{8q^3}{S}}
 +\frac{6 q^6}{S^5}+\frac{12 q^5}{S^6}+\frac{7 q^4}{S^7}+\frac{10 q^3}{S^8}+\frac{5 q^2}{S^9}+\frac{2 q}{S^{10}}+\frac{1}{S^{11}}\right),\\\nonumber
&\equiv f_5f^{11}_{25}\left(8q^{11}+2 q \left(\frac{1}{S^5}-11 q^5-q^{10}S^5\right)^2+11 q^6 \left(\frac{1}{S^5}-11 q^5-q^{10}S^5\right)
\right.\\ \nonumber
&\qquad\left.\vphantom{\ds\frac{8q^3}{S}}
+12 q^{22} S^{11}+8 q^{20} S^9+10 q^{19} S^8+6 q^{18} S^7+12 q^{17} S^6+12 q^{14} S^3+4 q^{13} S^2
\right.\\ \nonumber
&\qquad\left.\vphantom{\ds\frac{8q^3}{S}}
+10 q^{12} S+\frac{3 q^{10}}{S}+\frac{4 q^9}{S^2}+\frac{q^8}{S^3}+\frac{12 q^5}{S^6}+\frac{7 q^4}{S^7}+\frac{10 q^3}{S^8}+\frac{5 q^2}{S^9}+\frac{1}{S^{11}}\right),\\\nonumber
&\equiv f_5f^{11}_{25}\left(8q^{11}+2 q \left(\frac{f^6_5}{f^6_{25}}\right)^2+11 q^6 \left(\frac{f^6_5}{f^6_{25}}\right)
+12 q^{22} S^{11}+8 q^{20} S^9+10 q^{19} S^8
\right.\\ \nonumber
&\qquad\left.\vphantom{\ds\frac{8q^3}{S}}
+6 q^{18} S^7+12 q^{17} S^6+12 q^{14} S^3+4 q^{13} S^2+10 q^{12} S+\frac{3 q^{10}}{S}+\frac{4 q^9}{S^2}+\frac{q^8}{S^3}
\right.\\
&\qquad\left.\vphantom{\ds\frac{8q^3}{S}}
+\frac{12 q^5}{S^6}+\frac{7 q^4}{S^7}+\frac{10 q^3}{S^8}+\frac{5 q^2}{S^9}+\frac{1}{S^{11}}\right).
\end{align}
If we extract those terms in which the power of $q$ is congruent to 1 modulo 5, divide by $q$ and replace $q^5$ by $q$, we have
\begin{equation}\label{2.4}
\ds\sum_{n=0}^{\infty}B_{5,13}(5n+1)q^n\equiv2\frac{f_{13}}{f_5}+11f^7_1f^5_5q+8f_1f^{11}_5q^2.
\end{equation}
Substituting \eqref{0.2} in \eqref{2.4}, we have
\begin{align}\label{2.5}\nonumber
\ds\sum_{n=0}^{\infty}B_{5,13}(5n+1)q^n&\equiv2\frac{f_{325}}{f_5}\left(\frac{1}{S_1}-q^{13}-q^{26}S_1\right)+11f^5_5f^7_{25}\left(\frac{1}{S}-q-q^2S\right)^7q\\
&+8f^{11}_5f_{25}\left(\frac{1}{S}-q-q^2S\right)q^2.
\end{align}
If we extract those terms in which the power of $q$ is congruent to 3 modulo 5, divide by $q^3$, we have
\begin{equation}\label{2.6}\nonumber
\ds\sum_{n=0}^{\infty}B_{5,13}(5^2n+16)q^{5n}\equiv11\frac{f_{325}}{f_5}q^{10}+11f^5_5f^7_{25}\left(8q^5+\left(\frac{1}{S^5}-11 q^5-q^{10}S^5\right)\right)+5f^{11}_5f_{25},
\end{equation}
from which can be written as
\begin{align}\label{2.7}\nonumber
\ds\sum_{n=0}^{\infty}B_{5,13}(5^2n+16)q^{5n}&\equiv3f^{11}_5f_{25}+10f^5_5f^7_{25}q^5+11\frac{f_{325}}{f_5}q^{10}.
\end{align}
The above equation $q^5$ replace by $q$, we have
\begin{equation}\label{2.8}
\ds\sum_{n=0}^{\infty}B_{5,13}(5^2n+16)q^{n}\equiv3f^{11}_1f_5+10f^5_1f^7_{5}q+11\frac{f_{65}}{f_1}q^2.
\end{equation}
Substituting \eqref{0.2} and \eqref{0.3a} in \eqref{2.8}, we have
\begin{align}\label{2.9}\nonumber
\ds\sum_{n=0}^{\infty}B_{5,13}\left(5^2n+16\right)q^{n}&\equiv 3f_5f^{11}_{25}\left(\frac{1}{S}-q-q^2S\right)^{11}
+10f^7_{5}f^5_{25}\left(\frac{1}{S}-q-q^2S\right)^5q\\\nonumber
&\qquad+11\frac{f_{65}f^5_{25}}{f_5^6}\left(\vphantom{\ds\frac{8q^3}{S}}q^8 S^4-q^7 S^3+2 q^6 S^2-3 q^5 S+5 q^4
\right.\\
&\qquad\qquad\left.\vphantom{\ds\frac{8q^3}{S}}
+\frac{3 q^3}{S}+\frac{2 q^2}{S^2}+\frac{q}{S^3}+\frac{1}{S^4}\right)q^2.
\end{align}
If we extract those terms in which the power of $q$ is congruent to 1 modulo 5, divide by $q$, we have
\begin{align}\label{2.10}\nonumber
\ds\sum_{n=0}^{\infty}B_{5,13}\left(5^3n+41\right)q^{5n}&\equiv 3f_5f^{11}_{25}\left(8q^{10}+2\left(\frac{f^6_5}{f^6_{25}}\right)^2
+11q^5 \left(\frac{f^6_5}{f^6_{25}}\right)\right)+10f^7_{5}f^5_{25}\left(\frac{f^6_5}{f^6_{25}}\right)\\
&\qquad+3\frac{f_{65}f^5_{25}}{f_5^6}q^5.
\end{align}
The above equation $q^5$ replace by $q$, we have
\begin{equation}\label{2.11}
\ds\sum_{n=0}^{\infty}B_{5,13}\left(5^3n+41\right)q^{n}\equiv3\frac{f_{13}}{f_5}+10f^7_1f^5_5q+11f_1f^{11}_5q^2.
\end{equation}
Substituting \eqref{0.2} in \eqref{2.11}, we have
\begin{align}\label{2.12}\nonumber
\ds\sum_{n=0}^{\infty}B_{5,13}\left(5^3n+41\right)q^n&\equiv3\frac{f_{325}}{f_5}\left(\frac{1}{S_1}-q^{13}-q^{26}S_1\right)
+10f^5_5f^7_{25}\left(\frac{1}{S}-q-q^2S\right)^7q\\
&\qquad+11f^{11}_5f_{25}\left(\frac{1}{S}-q-q^2S\right)q^2.
\end{align}
If we extract those terms in which the power of $q$ is congruent to 3 modulo 5, divide by $q^3$, we have
\begin{equation}\label{2.13}
\ds\sum_{n=0}^{\infty}B_{5,13}\left(5^4n+416\right)q^{5n}\equiv10\frac{f_{325}}{f_5}q^{10}
+10f^5_5f^7_{25}\left(8q^5+\left(\frac{f^6_5}{f^6_{25}}\right)\right)+2f^{11}_5f_{25}
\end{equation}
The above equation $q^5$ replace by $q$, we have
\begin{equation}\label{2.14}
\ds\sum_{n=0}^{\infty}B_{5,13}(5^4n+416)q^{n}\equiv12f^{11}_1f_5+2f^5_1f^7_{5}q+10\frac{f_{65}}{f_1}q^2.
\end{equation}
Now compare the equation \eqref{2.14}, \eqref{2.2} and \eqref{2.8}, we have.
\begin{equation}\label{2.x}
B_{5,13}(5^4n+416)\equiv8B_{5,13}(5^2n+16)+B_{5,13}(n).
\end{equation}
Which is true if $m=1$. Now, we prove by induction on $k$ in \eqref{2.1}. Assume that Lemma \ref{3} is true for $k=m-1$ and $k=m$,
\begin{equation}\label{2.14a}
B_{5,13}\left(5^{2m-2}n+\frac{2\cdot5^{2m-2}-2}{3}\right)\equiv C_{m-1}B_{5,13}(5^2n+16)+c_{m-1}B_{5,13}(n)
\end{equation}
and
\begin{equation}\label{2.14b}
B_{5,13}\left(5^{2m}n+\frac{2\cdot5^{2m}-2}{3}\right)\equiv C_{m}B_{5,13}(5^2n+16)+c_{m}B_{5,13}(n).
\end{equation}
Now, we will show that $k=m+1$ is true. Replacing $n=\ds5^{2m-2}+\frac{2\cdot5^{2m-2}-2}{3}$ in \eqref{2.x}, we have
\begin{align}\label{2.14c}\nonumber
B_{5,13}\left(5^{2m+2}n+\frac{2\cdot5^{2m+2}-2}{3}\right)&\equiv8B_{5,13}\left(5^{2m}n+\frac{2\cdot5^{2m}-2}{3}\right)\\
&\qquad+B_{5,13}\left(5^{2m-2}n+\frac{2\cdot5^{2m-2}-2}{3}\right).
\end{align}
we can easily check that
\begin{equation}\label{2.14d}
C_{m+1}=8C_m+C_{m-1}
\end{equation}
and
\begin{equation}\label{2.14e}
c_{m+1}=8c_m+c_{m-1}
\end{equation}
From equation \eqref{2.14a}, \eqref{2.14b}, \eqref{2.14c}, \eqref{2.14d} and \eqref{2.14e}, we have
\begin{align}\label{2.14f}\nonumber
B_{5,13}\left(5^{2m+2}n+\frac{2\cdot5^{2m+2}-2}{3}\right)&\equiv8(C_mB_{5,13}(5^2n+16)+c_mB_{5,13}(n))\\\nonumber
&\qquad+C_{m-1}B_{5,11}(5^2n+16)+c_{m-1}B_{5,11}(n),\\\nonumber
&\equiv(8C_m+C_{m-1})B_{5,13}(5^2n+16)+(8c_m+c_{m-1})B_{5,13}(n),\\
&\equiv C_{m+1}B_{5,13}(5^2n+16)+c_{m+1}B_{5,13}(n).
\end{align}
Therefore the Lemma \ref{3} is true by induction of \eqref{2.1}.
\end{proof}
\begin{proposition}\normalfont For all $n\geq0$,
\begin{eqnarray}
\label{2.16a}
B_{5,13}\left(5^6n+\frac{(3k+1)\cdot5^5-2}{3}\right)&\equiv&0\pmod{13},~~\textrm{where}~~ k=1,5.\\
\label{2.16b}
B_{5,13}\left(5^6n+\frac{2\cdot5^6-2}{3}\right)&\equiv&8B_{5,13}(n)\pmod{13}.
\end{eqnarray}
\end{proposition}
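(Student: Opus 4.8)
The plan is to establish the two congruences separately, both resting on Lemma~\ref{3} and the dissection identities of Section~2.

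For \eqref{2.16b} it suffices to specialise Lemma~\ref{3} to $m=3$. The initial data of the sequences $\{C_m\},\{c_m\}$ are $C_0=0,\ c_0=1,\ C_1=1,\ c_1=0$ (read off from the closed forms, or from the $m=0,1$ instances of \eqref{2.1}), whence $C_2=8,\ c_2=1$; the recurrences \eqref{2.14d}--\eqref{2.14e} then give $C_3=8C_2+C_1=65\equiv 0\pmod{13}$ and $c_3=8c_2+c_1=8$. Putting $m=3$ in \eqref{2.1} therefore kills the $B_{5,13}(5^2n+16)$-term and leaves exactly $B_{5,13}\big(5^6n+\tfrac{2\cdot 5^6-2}{3}\big)\equiv 8\,B_{5,13}(n)\pmod{13}$, which is \eqref{2.16b}.

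For \eqref{2.16a} I would first rewrite the index: since $5^6n+\tfrac{(3k+1)5^5-2}{3}=5^5(5n+k)+\tfrac{5^5-2}{3}$ and $\tfrac{5^5-2}{3}=1041=5^4+416$, it is enough to show that in the series $G(q):=\sum_{n\ge 0}B_{5,13}(5^5n+1041)q^n$ the coefficients of $q^{5n}$ (the case $k=5$, via $q^{5(n+1)}$) and of $q^{5n+1}$ (the case $k=1$) are $\equiv 0\pmod{13}$. The series $G$ is obtained from \eqref{2.14} by the very manipulation already performed twice in the proof of Lemma~\ref{3} (namely \eqref{2.2}$\to$\eqref{2.4} and \eqref{2.8}$\to$\eqref{2.11}): substitute \eqref{0.2} into $f_1^{11}$ and $f_1^5$ and \eqref{0.3a} into $1/f_1$, retain only the terms whose exponent of $q$ is $\equiv 1\pmod 5$, divide by $q$, and replace $q^5$ by $q$. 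The three summands of \eqref{2.14} then contribute, respectively: $12$ times the right side of \eqref{2.4}; a multiple of $f_{13}/f_5$, since the $\equiv 0\pmod 5$ part of $f_1^5=f_{25}^5(1/S-q-q^2S)^5$ is $f_{25}^5\big(1/S^5-11q^5-q^{10}S^5\big)=f_5^6/f_{25}$ by \eqref{0.3}; and a multiple of $q\,f_5^5f_{13}/f_1^6$, the only monomial of the \eqref{0.3a}-expansion of $q^2/f_1$ of $q$-exponent $\equiv 1\pmod 5$ being $5q^6$. Using $f_{13}\equiv f_1^{13}\pmod{13}$ from \eqref{k1} to rewrite $f_5^5f_{13}/f_1^6\equiv f_1^7f_5^5$, and reducing the coefficients mod~$13$, the coefficient of $f_{13}/f_5$ is $2\cdot 12+2=26\equiv 0$, that of $f_1^7f_5^5q$ is $11\cdot 12+5\cdot 10=182\equiv 0$, and that of $f_1f_5^{11}q^2$ is $8\cdot 12=96\equiv 5$, so $G(q)\equiv 5\,f_1f_5^{11}q^2\pmod{13}$. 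A final use of \eqref{0.2} gives $G(q)\equiv 5\,f_5^{11}f_{25}\big(q^2/S-q^3-q^4S\big)\pmod{13}$, a power series supported only on exponents $\equiv 2,3,4\pmod 5$. Hence its $q^{5n}$- and $q^{5n+1}$-coefficients vanish modulo~$13$, which is \eqref{2.16a} for $k=5$ and $k=1$.

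I expect the main obstacle to be the single $5$-dissection of \eqref{2.14}: expanding $(1/S-q-q^2S)^{11}$ and $(1/S-q-q^2S)^5$, isolating the monomials $q^aS^b$ with $a\equiv 1\pmod 5$, reducing the (large) multinomial coefficients modulo~$13$, and recognising the collapses $1/S^5-11q^5-q^{10}S^5=f_5^6/f_{25}^6$. This is mechanical but bookkeeping-heavy and error-prone; it is, however, structurally identical to the passage \eqref{2.8}$\to$\eqref{2.11} already in the paper, so it can be transcribed almost verbatim with only the numerical coefficients changed. The one point requiring care is that the ``unwanted'' terms cancel modulo~$13$ at the last step, which is exactly where $f_{13}\equiv f_1^{13}$ and the particular residues $12,2,10$ occurring in \eqref{2.14} come in.
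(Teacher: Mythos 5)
Your proposal is correct and follows essentially the same route as the paper: the paper likewise obtains $\sum_{n\ge0}B_{5,13}(5^5n+1041)q^n\equiv 5f_1f_5^{11}q^2\pmod{13}$ by $5$-dissecting \eqref{2.14} via \eqref{0.2} and \eqref{0.3a}, then applies \eqref{0.2} once more to see the support lies in residues $2,3,4\pmod 5$, giving \eqref{2.16a}, while \eqref{2.16b} comes from Lemma~\ref{3} with $C_3\equiv0$, $c_3\equiv8\pmod{13}$. Your numerical bookkeeping ($26\equiv0$, $182\equiv0$, $96\equiv5$) matches the paper's \eqref{2.19}--\eqref{2.20}, and you in fact spell out the $m=3$ specialisation more explicitly than the paper does.
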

\begin{proof}
From Lemma \ref{2}, put $m=2$, we find that, $C_2\equiv8\pmod{13}$ and $c_2\equiv1\pmod{13}$,
Now substituting \eqref{0.2} and \eqref{0.3a} into \eqref{2.14}, we have
\begin{align}\label{2.17}\nonumber
\ds\sum_{n=0}^{\infty}B_{5,13}(5^4n+416)q^{n}&\equiv12f_5f^{11}_{25}\left(\frac{1}{S}-q-q^2S\right)^{11}
+2f^7_{5}f^5_{25}\left(\frac{1}{S}-q-q^2S\right)^5q\\\nonumber
&\qquad+10\frac{f_{65}f^5_{25}}{f_5^6}\left(\vphantom{\ds\frac{8q^3}{S}}q^8 S^4-q^7 S^3+2 q^6 S^2-3 q^5 S+5 q^4
\right.\\
&\qquad\qquad\left.\vphantom{\ds\frac{8q^3}{S}}
+\frac{3 q^3}{S}+\frac{2 q^2}{S^2}+\frac{q}{S^3}+\frac{1}{S^4}\right)q^2.
\end{align}
If we extract those terms in which the power of $q$ is congruent to 1 modulo 5, divide by $q$, we have
\begin{align}\label{2.19}\nonumber
\ds\sum_{n=0}^{\infty}B_{5,13}(5^5n+1041)q^{5n}&\equiv12f_5f^{11}_{25}\left(8q^{10}+2\left(\frac{f^6_5}{f^6_{25}}\right)^2
+11q^5 \left(\frac{f^6_5}{f^6_{25}}\right)\right)\\
&+2f^7_{5}f^5_{25}\left(\frac{f^6_5}{f^6_{25}}\right)+11\frac{f_{65}f^5_{25}}{f_5^6}q^5\pmod{13}.
\end{align}
The above equation $q^5$ replace by $q$, we have
\begin{equation}\label{2.20}
\ds\sum_{n=0}^{\infty}B_{5,13}(5^5n+1041)q^{n}\equiv5f_1f^{11}_5q^2.
\end{equation}
Substituting \eqref{0.2} into \eqref{2.20}, we have
\begin{equation}\label{2.21}
\ds\sum_{n=0}^{\infty}B_{5,13}(5^5n+1041)q^{n}\equiv5f^{11}_5f_{25}\left(\frac{1}{S}-q-q^2S\right)q^2.
\end{equation}
This  completes  the  proof of equation \eqref{2.16a} and \eqref{2.16b}, follow from \eqref{2.21}.
\end{proof}
\ni\textbf{Proof of Theorem \ref{1b}.} Equation \eqref{14} follow from \eqref{2.16b} by mathematical induction. Employing \eqref{2.16a} in \eqref{14}, we obtain \eqref{15}.
\section{Congruence for $(81,17)$-regular bipartition}
In this section we shall develop some results to prove our main Theorem \ref{1c}.
\begin{proposition}\normalfont For $n\geq0$, we have
\begin{equation}\label{s8}
B_{81,17}(27(3n+k)+23)\equiv 0\pmod{17},~~ \textrm{where}~~ k\in\{2,3\}
\end{equation}
and
\begin{equation}\label{s9}
\ds\sum_{n=0}^{\infty}B_{81,17}(81n+50)q^n\equiv5f^{16}_1\pmod{17}.
\end{equation}
\end{proposition}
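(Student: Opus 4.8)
The plan is to reduce the generating function modulo $17$ and then peel off the progression $81n+50$ by four successive $3$-dissections; \eqref{s8} will emerge as the vanishing of two of the intermediate residue classes and \eqref{s9} as the surviving one.

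First I would put $l=81$, $m=17$ in \eqref{0.1}, so that $\sum_{n\ge0}B_{81,17}(n)q^n=f_{81}f_{17}/f_1^2$, and then invoke \eqref{k1} with $p=17$ to replace $f_{17}$ by $f_1^{17}$, obtaining $\sum_{n\ge0}B_{81,17}(n)q^n\equiv f_1^{15}f_{81}\pmod{17}$. The key structural observation is that $f_{81}=(q^{81};q^{81})_\infty$ is a power series in $q^{81}$, so it is inert under every $3$-, $9$- and $27$-dissection, and under four consecutive substitutions $q^3\mapsto q$ it descends in turn to $f_{27}$, $f_9$, $f_3$, and finally $f_1$. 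Thus the real content is the iterated $3$-dissection of $f_1^{15}$.

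Next I would record a $3$-dissection of $f_1^{15}$. The cleanest route is to start from a $3$-dissection $f_1^3=\alpha(q^3)+q\,\beta(q^3)$ — there is no $q^2$ term, because in Jacobi's identity $f_1^3=\sum_{j\ge0}(-1)^j(2j+1)q^{j(j+1)/2}$ the exponents $j(j+1)/2$ are never $\equiv 2\pmod 3$ — and then raise it to the fifth power, so that the $q$-powers in $(f_1^3)^5$ fall into the three residue classes modulo $3$ in a controlled way (alternatively one may iterate the $2$-dissection identities \eqref{kp}--\eqref{kp1} after the rescaling $q\mapsto q^3$). The base-$3$ digits of $50$, read from the least significant end, are $1,2,1,2$, so the dissection chain is: extract the $q^{3n+2}$ part of $f_1^{15}f_{81}$, divide by $q^2$, and set $q^3=q$, giving $\sum_{n\ge0}B_{81,17}(3n+2)q^n\equiv(\cdots)f_{27}\pmod{17}$; then extract the $q^{3n+1}$ part, giving $\sum_{n\ge0}B_{81,17}(9n+5)q^n\equiv(\cdots)f_9\pmod{17}$; then extract the $q^{3n+2}$ part, arriving at a congruence $\sum_{n\ge0}B_{81,17}(27n+23)q^n\equiv\gamma(q)\,f_3\pmod{17}$ for an explicit eta-quotient $\gamma$. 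From the explicit shape of $\gamma(q)\,f_3$ one then reads off that the coefficients of $q^{3n}$ and of $q^{3n+2}$ are all divisible by $17$ — this is precisely \eqref{s8} for $k=3$ and $k=2$ — while extracting the remaining $q^{3n+1}$ part, dividing by $q$ and setting $q^3=q$ (so that $f_3\mapsto f_1$) should leave exactly $5f_1^{16}$, which is \eqref{s9}.

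The reduction $f_{17}\equiv f_1^{17}$ and the inertness of $f_{81}$ are routine; the real work, and the main obstacle, is the bookkeeping of the four iterated $3$-dissections of the rather high power $f_1^{15}$. One must keep the intermediate eta-quotients under control, track the substitutions $q^3\mapsto q$ correctly at each step, and finally verify the arithmetic coincidence that, among the surviving monomials at the last stage, the unwanted coefficients collapse to $0\pmod{17}$ while the remaining ones reassemble exactly into $5f_1^{16}$. Stated coefficient-wise, what needs to be checked is that the ``$81n+50$ section'' of $f_1^{15}$ is $\equiv 5f_1^{15}\pmod{17}$; multiplying back by the single surviving copy of $f_1$ coming from $f_{81}$ then produces $5f_1^{16}$, as claimed.
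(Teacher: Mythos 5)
Your overall strategy coincides with the paper's: reduce to $\sum_{n\ge0} B_{81,17}(n)q^n\equiv f_1^{15}f_{81}\pmod{17}$, note that $f_{81}$ is inert under $3$-dissection and descends through $f_{27},f_9,f_3,f_1$, and peel off $81n+50$ by four successive $3$-dissections in the residue order $2,1,2,1$ (your chain $3n+2\to 9n+5\to 27n+23\to 81n+50$ is exactly the paper's). Your parity observation that $f_1^3$ has no $q^{3n+2}$ part is correct and is implicit in the identity the paper actually uses.

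The gap is that everything after this point --- which is the entire content of the proposition --- is deferred rather than proved. You never write down an explicit $3$-dissection $f_1^3=\alpha(q^3)+q\,\beta(q^3)$, and without one the expansion of $(\alpha+q\beta)^5$ and the three further dissections cannot be carried out, nor can the ``arithmetic coincidence'' that the final series is $5f_3^{16}q$ be verified; you flag this as the main obstacle but do not overcome it. The paper's proof lives precisely here: it uses Ramanujan's cubic continued fraction dissection $f_1^3=f_9^3(u^{-1}-3q+4q^3u^2)$ with $u=f_3f_{18}^3/(f_6f_9^3)$ together with the companion identity $(v^{-1}+4qv^2)^3=f_1^{12}/f_3^{12}+27q$, $v=f_1f_6^3/(f_2f_3^3)$, which lets one recognize the Laurent polynomial in $v^{3}$ appearing after each extraction as $5(v^{-1}+4qv^2)^3+cq$ modulo $17$; this collapses every intermediate generating function to a short eta-quotient sum ($5f_1^{12}f_{27}f_3^3+11f_{27}f_3^{15}q$, then $2f_9f_1^{15}+9f_3^{12}f_9f_1^3q$, then $5f_3^{16}q$), from which \eqref{s8} and \eqref{s9} drop out at once. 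Without this (or an equivalent explicit mechanism) the bookkeeping you defer is not routine. Also, your parenthetical suggestion to use \eqref{kp}--\eqref{kp1} after the rescaling $q\mapsto q^3$ cannot work: those are $2$-dissections, and rescaling them does not yield a $3$-dissection.
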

\begin{proof}
Setting $l=81, m=17$ in \eqref{0.1}, we have
\begin{equation}\label{7.1}
\ds\sum_{n=0}^{\infty}B_{81,17}(n)q^n=\frac{f_{81}f_{17}}{f_1^2}.
\end{equation}
From \eqref{k1} and \eqref{7.1}, we have
\begin{equation}\label{7.2}
\ds\sum_{n=0}^{\infty}B_{81,17}(n)q^n\equiv f_{81}f^{15}_1 \pmod{17}.
\end{equation}
Entry $1(\textrm{iv})$ on page $345$ of \cite{Berndt} is Ramanujan's cubic continued fraction
\begin{align}\label{7.3}
f^3_1 &=f^3_9(u^{-1}-3q+4q^3u^2),
\end{align}
where
\begin{equation}\nonumber
u=\ds\frac{f_3f^3_{18}}{f_6f^3_9}.
\end{equation}
Substituting \eqref{7.3} into \eqref{7.2}, we have
\begin{align}\label{7.4}\nonumber
\ds\sum_{n=0}^{\infty}B_{81,17}(n)q^n&\equiv f_{81}f^{15}_9 \left(\frac{1}{u^5}+\frac{2 q}{u^4}+\frac{5 q^2}{u^3}+\frac{5 q^3}{u^2}+\frac{12 q^4}{u}+4 q^5+6 q^6 u+10 q^7 u^2+2 q^8 u^3+9 q^9 u^4
\right.\\
&\qquad\left.\vphantom{\ds\frac{8q^3}{S}}
+2 q^{10} u^5+14 q^{11} u^6+5 q^{12} u^7+2 q^{13} u^8+4 q^{15} u^{10}\right).
\end{align}
If we extract those terms in which the power of $q$ is congruent to 2 modulo 3, divide by $q^2$ and replace $q^3$ by $q$, we have
\begin{align}\label{7.5}\nonumber
\ds\sum_{n=0}^{\infty}B_{81,17}(3n+2)q^n&\equiv f_{27}f^{15}_3\left(\frac{5}{v^3}+4 q+2 q^2 v^3+14 q^{3} v^6\right),\\
&\equiv f_{27}f^{15}_3\left(5\left(\frac{1}{v}+4 q v^2\right)^3+12 q\right).
\end{align}
Here we have used Entry $1$ on page $345$ of in \cite{Berndt}, namely,\\
\begin{align}\label{7.6}
\ds\frac{f^{12}_1}{f^{12}_3}+27q&=(v^{-1}+4qv^2)^3,
\end{align}
where
\begin{equation}\nonumber
v:=\ds\frac{f_1f^3_6}{f_2f^3_3}.
\end{equation}
Substituting \eqref{7.6} into \eqref{7.5}, we have
\begin{align}\label{7.7}\nonumber
\ds\sum_{n=0}^{\infty}B_{81,17}(3n+2)q^n&\equiv f_{27}f^{15}_3\left(5 \left(\frac{f_1^{12}}{f_3^{12}}+27 q\right)+12 q\right),\\
&\equiv 5 f_1^{12} f_{27} f_3^3+11 f_{27} f_3^{15} q.
\end{align}
Substituting \eqref{7.3} into \eqref{7.7}, we have
\begin{align}\label{7.8}\nonumber
\ds\sum_{n=0}^{\infty}B_{81,17}(3n+2)q^n&\equiv 5 f_9^{12} f_{27} f_3^3\left(\frac{1}{u^4}+\frac{5 q}{u^3}+\frac{3 q^2}{u^2}+\frac{10 q^3}{u}+5 q^4+7 q^5 u+4 q^6 u^2+2 q^7 u^3+14 q^8 u^4
\right.\\
&\qquad\left.\vphantom{\ds\frac{8q^3}{S}}
+q^9 u^5+14 q^{10} u^6+q^{12} u^8\right)+11 f_{27} f_3^{15} q.
\end{align}
If we extract those terms in which the power of $q$ is congruent to 1 modulo 3, divide by $q$ and replace $q^3$ by $q$, we have
\begin{align}\label{7.9}\nonumber
\ds\sum_{n=0}^{\infty}B_{81,17}(9n+5)q^n&\equiv 5 f_3^{12} f_{9} f_1^3\left(\frac{5}{v^3}+5 q+2 q^2 v^3+14 q^{3} v^6\right)+11 f_{9} f_1^{15},\\ &\equiv 5 f_3^{12} f_{9} f_1^3\left(5 \left(\frac{1}{v}+4 q v^2\right)^3+13 q\right)+11 f_{9} f_1^{15}.
\end{align}
Substituting \eqref{7.6} into \eqref{7.9}, we have
\begin{align}\label{7.10}\nonumber
\ds\sum_{n=0}^{\infty}B_{81,17}(9n+5)q^n&\equiv5 f_3^{12} f_{9} f_1^3\left(5 \left(\frac{f_1^{12}}{f_3^{12}}+27 q\right)+13 q\right)+11 f_{9} f_1^{15},\\
&\equiv 2 f_9 f_1^{15}+9 f_3^{12} f_9 f_1^3 q.
\end{align}
Substituting \eqref{7.3} into \eqref{7.10}, we have
\begin{align}\label{7.11}\nonumber
\ds\sum_{n=0}^{\infty}B_{81,17}(9n+5)q^n&\equiv2 f_9^{16}\left(\frac{1}{u^5}+\frac{2 q}{u^4}+\frac{5 q^2}{u^3}+\frac{5 q^3}{u^2}+\frac{12 q^4}{u}+4 q^5+6 q^6 u+10 q^7 u^2+2 q^8 u^3+9 q^9 u^4
\right.\\\nonumber
&\qquad\qquad\left.\vphantom{\ds\frac{8q^3}{S}}
+2 q^{10} u^5+14 q^{11} u^6+5 q^{12} u^7+2 q^{13} u^8+4 q^{15} u^{10}\right)\\
&\qquad+9 f_3^{12}f_9^4\left(qu^{-1}-3q^2+4q^4u^2\right).
\end{align}
If we extract those terms in which the power of $q$ is congruent to 2 modulo 3, divide by $q^2$ and replace $q^3$ by $q$, we have
\begin{align}\label{7.12}\nonumber
\ds\sum_{n=0}^{\infty}B_{81,17}(27n+23)q^n&\equiv2 f_3^{16}\left(\frac{5}{v^3}+4 q+2 q^2 v^3+14 q^{3} v^6\right)+7 f_1^{12}f_3^4,\\
&\equiv2 f_3^{16}\left(5\left(\frac{1}{v}+4 q v^2\right)^3+12 q\right)+7 f_1^{12}f_3^4.
\end{align}
Substituting \eqref{7.6} into \eqref{7.12}, we have
\begin{align}\label{7.13}\nonumber
\ds\sum_{n=0}^{\infty}B_{81,17}(27n+23)q^n&\equiv 2 f_3^{16}\left(5 \left(\frac{f_1^{12}}{f_3^{12}}+27 q\right)+12 q\right)+7 f_1^{12}f_3^4,\\
&\equiv 5 f_3^{16} q.
\end{align}
This  completes  the  proof of equation \eqref{s8} and \eqref{s9}, follow from \eqref{7.13}.
\end{proof}
\begin{corollary}\label{q}\normalfont For $n\geq0$, we have
\begin{equation}\label{s10}
b_{17}\left(4^9n+\frac{2(4^8-1)}{3}\right)\equiv0\pmod{17},
\end{equation}
\begin{equation}\label{s11}
b_{17}\left(4^9n+\frac{2(4^9-1)}{3}\right)\equiv8f^{16}_1\pmod{17}
\end{equation}
and
\begin{equation}\label{s12}
b_{17}\left(2\cdot4^8+\frac{5\cdot4^8-2)}{3}\right)\equiv b_{17}(2n+1)\pmod{17}
\end{equation}
\end{corollary}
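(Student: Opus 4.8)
The plan is to reduce all three congruences to a $2$-dissection analysis of $f_1^{16}\pmod{17}$, completely parallel to the treatment of $f_1^5f_3\pmod7$ in Lemma \ref{kpz} and Proposition \ref{kpz1}. By \eqref{0.a} and \eqref{k1} the generating function of the $17$-regular partitions satisfies $\sum_{n\ge0}b_{17}(n)q^n=f_{17}/f_1\equiv f_1^{16}\pmod{17}$, so \eqref{s10}--\eqref{s12} are statements about the coefficients of $f_1^{16}$ modulo $17$ on the stated progressions. First I would write $f_1^{16}=(f_1^4)^4$, substitute the $2$-dissection \eqref{e2}, and expand modulo $17$ to split $f_1^{16}$ into its even and odd parts in $q$; then iterate, using \eqref{a} and \eqref{b} to dissect the reciprocal eta-quotients that appear and re-dissecting until every eta-modulus is a multiple of $4$, so that at each stage the coefficients of $q^{4n+2}$ can be extracted cleanly. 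As in \eqref{w.11}, I expect this to yield a two-term recursion
\[
b_{17}(16n+10)\equiv\lambda\,b_{17}(4n+2)+\mu\,b_{17}(n)\pmod{17},
\]
in which the residues $10$ and $2$ are exactly $\tfrac{2(4^2-1)}{3}$ and $\tfrac{2(4-1)}{3}$, so that for the family $\ell_k:=b_{17}\big(4^kn+\tfrac{2(4^k-1)}{3}\big)$ it reads $\ell_2\equiv\lambda\ell_1+\mu\ell_0$; substituting $n\mapsto4^{m-1}n+\tfrac{2(4^{m-1}-1)}{3}$ then promotes it to $\ell_{m+1}\equiv\lambda\ell_m+\mu\ell_{m-1}$. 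The companion series $\sum b_{17}(2n+1)q^n$ (the odd part of $f_1^{16}$) must be carried along the whole dissection, since it feeds into \eqref{s12}.

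Next, following the induction in the proof of Lemma \ref{kpz} (equations \eqref{w.12}--\eqref{w.17}), I would solve this recurrence with its base cases and obtain, by induction on $m$,
\[
b_{17}\big(4^mn+\tfrac{2(4^m-1)}{3}\big)\equiv E_m\,b_{17}(4n+2)+e_m\,b_{17}(n)\pmod{17},
\]
and then evaluate at $m=9$. The point is that, read modulo $17$, the characteristic polynomial $x^2-\lambda x-\mu$ has a pair of Galois-conjugate roots $r_1,r_2\in\mathbb F_{17^2}$ whose ratio has multiplicative order $9$ — possible since $9\mid 17^2-1$ — so that $E_9\equiv0\pmod{17}$ (as $r_1^9=r_2^9$), the $b_{17}(4n+2)$ term vanishes at level $9$, and one is left with $b_{17}\big(4^9n+\tfrac{2(4^9-1)}{3}\big)\equiv e_9\,b_{17}(n)$ with $e_9\equiv8\pmod{17}$; note that $8^9\equiv8\pmod{17}$, which is precisely why the exponent $9$ and multiplier $8$ are the self-consistent ones. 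This is \eqref{s11}. For the other two statements, at level $8$ the series $\sum b_{17}\big(4^8n+\tfrac{2(4^8-1)}{3}\big)q^n$ will be congruent modulo $17$ to a constant times a single eta-quotient together with a copy of the $b_{17}(2n+1)$-stream; one further $2$-dissection of it exhibits the sub-progression $4^9n+\tfrac{2(4^8-1)}{3}$ as identically $0$ — this is \eqref{s10} — and the sub-progression $2\cdot4^8n+\tfrac{5\cdot4^8-2}{3}$ as a constant multiple of $b_{17}(2n+1)$ — this is \eqref{s12}.

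The main obstacle is the first step. In Section 3 the dissections \eqref{kp}--\eqref{kp3} keep everything in terms of $f_2,f_4,f_6,f_8,f_{12},f_{24}$, but here \eqref{a} and \eqref{b} introduce $f_{16}$ (and then $f_{32}$), so several extra rounds of dissection are needed before a clean two-term recursion surfaces, and one must keep track of every coefficient modulo $17$ throughout; pinning down the precise constants $\lambda,\mu$ and verifying the arithmetic degeneracy that forces $E_9\equiv0\pmod{17}$ (the analogue, for the prime $17$ and exponent $9$, of the repeated-root degeneracy modulo $7$ behind \eqref{w.18}--\eqref{w.19}) is the delicate heart of the argument. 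Carrying the $b_{17}(2n+1)$-stream through the entire computation for \eqref{s12} is an additional bookkeeping burden but introduces no new idea.
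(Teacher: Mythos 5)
Your plan is workable and lands on the correct arithmetic: the two-term recursion you posit is $b_{17}(16n+10)\equiv 2\,b_{17}(4n+2)+4\,b_{17}(n)\pmod{17}$, whose Binet-form solution indeed has $E_9\equiv0$ and $e_9\equiv8\pmod{17}$ (your explanation via the ratio of the conjugate roots $1\pm\sqrt5$ having order dividing $9$ in $\mathbb F_{17^2}^\times$ is the right one, and the sharper reason is that this ratio lies in the norm-one subgroup of order $17+1=18$), and \eqref{s10} and \eqref{s12} do come from one further $2$-dissection at level $8$. The substantive difference from the paper is that the step you call ``the delicate heart of the argument'' --- extracting the recursion from repeated $2$-dissections of $f_1^{16}$ --- is not carried out in the paper at all: the author simply imports \cite[Lemma 4.1]{Xia}, which already states $b_{17}\bigl(4^kn+\tfrac{2(4^k-1)}{3}\bigr)\equiv D(k)b_{17}(4n+2)+d(k)b_{17}(n)\pmod{17}$ with the explicit $\sqrt5$-coefficients, along with the eta-quotient forms of $\sum b_{17}(n)q^n$ and $\sum b_{17}(4n+2)q^n$. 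The paper then specializes to $k=8$, where $D(8)\equiv2$ and $d(8)\equiv13$ make the $f_1^{16}$ contributions cancel and leave the single quotient $q f_2^{24}/f_1^{8}$, and reads off all three congruences --- including \eqref{s11} --- from the even and odd parts of that one series; you instead obtain \eqref{s11} directly at level $9$ from $E_9\equiv0$, which is equivalent but bypasses the explicit dissection. So your route is mathematically the same, only with the key lemma re-derived rather than cited; if you follow your plan you should be aware you would be reproving a known result of Xia, and you would still need the explicit level-$8$ generating function (not just the recursion coefficients) to handle the sub-progressions in \eqref{s10} and \eqref{s12}.
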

\begin{proof}
In \cite[Lemma 4.1]{Xia}, we have, for all $n\geq0$,
\begin{equation}\label{7.14}
b_{17}\left(4^kn+\frac{2(4^k-1)}{3}\right)\equiv D(k)b_{17}(4n+2)+d(k)b_{17}(n) \pmod{17},
\end{equation}
where
\begin{eqnarray}\nonumber
D(k)&=&\frac{\sqrt{5}}{10}\left((1+\sqrt{5})^k-(1-\sqrt{5})^k\right), \\\nonumber
d(k)&=&\left(\frac{1}{2}-\frac{\sqrt{5}}{10}\right)(1+\sqrt{5})^k+\left(\frac{1}{2}+\frac{\sqrt{5}}{10}\right)(1-\sqrt{5})^k.
\end{eqnarray}
Put $k=8$ in \eqref{7.14}, we can easily find that $C(8)\equiv2 \pmod{17}$ and $c(8)\equiv13 \pmod{17}$, therefore
\begin{align}\label{7.15}\nonumber
b_{17}\left(4^8n+\frac{2(4^8-1)}{3}\right)&\equiv C(8)b_{17}(4n+2)+c(8)b_{17}(n),\\
&\equiv 2b_{17}(4n+2)+13b_{17}(n).
\end{align}
where
\begin{equation}\label{7.16}
\ds\sum_{n=0}^{\infty}b_{17}(n)q^n \equiv f^{16}_1\pmod{17}
\end{equation}
and
\begin{align}\label{7.17}
\ds\sum_{n=0}^{\infty}b_{17}(4n+2)q^n &\equiv2f^{16}_1+9q\frac{f^{24}_2}{f^8_1}.
\end{align}
Substituting \eqref{7.16} and \eqref{7.17} into \eqref{7.15}, we have
\begin{align}\label{7.18}\nonumber
\ds\sum_{n=0}^{\infty}b_{17}\left(4^8n+\frac{2(4^8-1)}{3}\right)q^n&\equiv2\left(2f^{16}_1+9q\frac{f^{24}_2}{f^8_1}\right)+13f^{16}_1,\\
&\equiv q\frac{f^{24}_2}{f^8_1}.
\end{align}
Substituting \eqref{e2} into \eqref{7.18}, we have
\begin{equation}\label{7.19}
\ds\sum_{n=0}^{\infty}b_{17}\left(4^8n+\frac{2(4^8-1)}{3}\right)q^n\equiv qf^{24}_2\left(\frac{f_4^{14}}{f^{14}_2f_8^4}+4q\frac{f_4^2f_8^4}{f^{10}_2}\right)^2.
\end{equation}
From \eqref{7.19}, we have
\begin{equation}\label{7.20}
\ds\sum_{n=0}^{\infty}b_{17}\left(2\cdot4^8n+\frac{2(4^8-1)}{3}\right)q^n\equiv8qf^{16}_2
\end{equation}
and
\begin{equation}\label{7.21}
\ds\sum_{n=0}^{\infty}b_{17}\left(4^8(2n+1)+\frac{2(4^8-1)}{3}\right)q^n\equiv \frac{f^{20}_2}{f^4_1f^8_4}+16qf^4_1f^4_2f^8_4.
\end{equation}
This  completes  the  proof of equation \eqref{s10}, \eqref{s11}  and \eqref{s12}, follow from \eqref{7.20} and \eqref{7.21}.
\end{proof}
\ni\textbf{Proof of Theorem \ref{1c}.}
From equation \eqref{s9} and \eqref{7.16}, we have
\begin{equation}\label{7.22}
B_{81,17}(81n+50)\equiv5b_{17}(n)
\end{equation}
Now, replacing $\ds n=4^{9m}n+\frac{2\cdot4^{8m}-2}{3}$, $\ds n=4^{9m}n+\frac{2\cdot4^{9m}-2}{3}$ and $\ds n=2\cdot4^{8m}n+\frac{5\cdot4^{8m}-2}{3}$ in \eqref{7.22} and employing with Corollary \ref{q}, 
we have \eqref{s13}, \eqref{s14} and \eqref{s15}.
\section{Proof of Theorem \ref{1d}.}
Setting $l=2$ and $m=8$ in \eqref{0.1}, we have
\begin{equation}\label{5.1}
\sum_{n=0}^{\infty}B_{2,8}(n)q^n=\frac{f_2f_8}{f_1^2}.
\end{equation}
Substituting \eqref{a} into \eqref{5.1}, we have
\begin{equation}\label{5.2}
\sum_{n=0}^{\infty}B_{2,8}(n)q^n=f_2f_8\left(\frac{f^5_8}{f^5_2f^2_{16}}+2q\frac{f^2_4f^2_{16}}{f_2^5f_8}\right).
\end{equation}
If we extract those terms in which the power of $q$ is congruent to 1 modulo 2, divide by $q$ and replace $q^2$ by $q$, we have
\begin{equation}\label{5.3}
\sum_{n=0}^{\infty}B_{2,8}(2n+1)q^n=2\frac{f^2_2f^2_8}{f^4_1}.
\end{equation}
Substituting \eqref{b} into \eqref{5.3}, we have
\begin{equation}\label{5.4}
\sum_{n=0}^{\infty}B_{2,8}(2n+1)q^n=2f^2_2f^2_8\left(\frac{f_4^{14}}{f^{14}_2f_8^4}+4q\frac{f_4^2f_8^4}{f^{10}_2}\right).
\end{equation}
If we extract those terms in which the power of $q$ is congruent to 1 modulo 2, divide by $q$ and replace $q^2$ by $q$, we have
\begin{equation}\label{5.5}
\sum_{n=0}^{\infty}B_{2,8}(4n+3)q^n=8\frac{f^2_2f^6_4}{f^8_1}.
\end{equation}
Substituting \eqref{b} into \eqref{5.5}, we have
\begin{eqnarray}\label{5.6}\nonumber
\sum_{n=0}^{\infty}B_{2,8}(4n+3)q^n&=&8f^2_2f^6_4\left(\frac{f_4^{14}}{f^{14}_2f_8^4}+4q\frac{f_4^2f_8^4}{f^{10}_2}\right)^2,\\
&=&8f^2_2f^6_4\left(\frac{f_4^{28}}{f^{28}_2f_8^8}+8q\frac{f_4^{16}}{f^{24}_2}+16q^2\frac{f_4^4f_8^8}{f^{20}_2}\right).
\end{eqnarray}
If we extract those terms in which the power of $q$ is congruent to 1 modulo 2, divide by $q$ and replace $q^2$ by $q$, we have
\begin{equation}\label{5.7}
\sum_{n=0}^{\infty}B_{2,8}(8n+7)q^n=64\frac{f^{22}_2}{f^{22}_1}.
\end{equation}
From binomial theorem, we have
\begin{equation}\label{5.7}
\sum_{n=0}^{\infty}B_{2,8}(8n+7)q^n\equiv9\frac{f^{2}_{22}}{f^{2}_{11}}\pmod{11}.
\end{equation}
This  completed  the  proof Theorem \ref{1d}, follow from \eqref{5.7}.
\subsection*{Acknowledgment}
I would like to thank Prof. K. Srinivas for some useful discussions which improve the presentation of the paper. This work is party supported by SERB MATRIX project No. MTR/2017/001006.

\end{document}